\def\diag{\mathop{\mathrm{diag}}}  
\newtheorem{theorem}{Theorem}
\newtheorem{lemma}{Lemma}
\newtheorem{definition}{Definition}
\newtheorem{corollary}{Corollary}
\newtheorem{remark}{Remark}
\newtheorem{assumptions}{Assumptions}
\newtheorem{problem}{Problem}
\newcommand{\eps}{\epsilon}
\newcommand{\la}{\langle}
\newcommand{\ra}{\rangle}
\newcommand{\sr}{\stackrel}
\newcommand{\rar}{\rightarrow}
\newcommand{\tri}{\sr{\triangle}{=}}
\newcommand{\be}{\begin{equation}}
\newcommand{\ee}{\end{equation}}
\newcommand{\bea}{\begin{eqnarray}}
\newcommand{\eea}{\end{eqnarray}}
\newcommand{\bes}{\begin{eqnarray*}}
\newcommand{\ees}{\end{eqnarray*}}
\newcommand{\bi}{\begin{itemize}}
\newcommand{\ei}{\end{itemize}}
\newcommand{\ben}{\begin{enumerate}}
\newcommand{\een}{\end{enumerate}}
\newcommand{\bp}{\begin{problem}}
\newcommand{\ep}{\end{problem}}
\newcommand{\hso}{\hspace{.1in}}
\newcommand{\hst}{\hspace{.2in}}
\newcommand{\noi}{\noindent}
\newcommand{\bc}{\begin{center}}
\newcommand{\ec}{\end{center}}
\begin{document}
%
\title{\bf Centralized Versus Decentralized Team Games of  Distributed Stochastic Differential Decision Systems with Noiseless Information Structures-Part II: Applications}


\author{ Charalambos D. Charalambous\thanks{C.D. Charalambous is with the Department of Electrical and Computer Engineering, University of Cyprus, Nicosia 1678 (E-mail: chadcha@ucy.ac.cy).}  \: and Nasir U. Ahmed\thanks{N.U Ahmed  is with the School of  Engineering and Computer Science, and Department of Mathematics,
University of Ottawa, Ontario, Canada, K1N 6N5 (E-mail:  ahmed@site.uottawa.ca).}
}

\maketitle

\begin{abstract}
In this second part of our two-part paper, we invoke the stochastic maximum principle, conditional Hamiltonian and the coupled backward-forward stochastic differential equations of the first part \cite{charalambous-ahmedFIS_Parti2012} to derive team optimal decentralized strategies for   distributed stochastic differential systems with noiseless information structures. We present examples of such team games of nonlinear  as well as linear quadratic forms. In some cases we obtain closed form expressions of the optimal  decentralized strategies.  

Through the examples, we illustrate the effect of information signaling  among the decision makers in reducing the computational complexity of optimal decentralized decision strategies. 
\end{abstract}

\vspace*{1.5cm}

  \vskip6pt\noindent {\bf Index Terms.}  Team Games Optimality, Stochastic Differential Systems, Decentralized, Stochastic Maximum Principle, Applications-Examples.
  
  \vspace*{1.5cm}

  \section{Introduction}
\label{introduction}
In the first part \cite{charalambous-ahmedFIS_Parti2012} of this two part paper, we have derived   team and person-by-person optimality conditions for distributed stochastic differential systems with noiseless decentralized information structures. Specifically, we considered distributed (coupled) stochastic differential equations of It\^o form driven  by Brownian motions, and decision makers acting on decentralized noiseless i) nonanticipative and ii) feedback information structures, and   we have  shown existence of team and person-by-person optimal strategies utilizing relaxed and regular strategies. Then we applied tools from the classical theory of stochastic optimization with some variations to derive team and person-by-person optimality conditions \cite{bismut1978,ahmed1981,ahmed-charalambous2012a,charalambous-hibey1996}.

The first important concussions drawn from \cite{charalambous-ahmedFIS_Parti2012} is that the classical theory of stochastic optimization  is not limited in mathematical concepts and procedures by the centralized assumption based upon which it is developed. It is directly applicable to differential  systems  consisting of multiple decision makers, in which  the acquisition of information and its processing is decentralized or shared among several locations, while the   decision makers  actions are based on different information structures. The second important conclusion drawn from \cite{charalambous-ahmedFIS_Parti2012} is that team and person-by-person optimality conditions are given by a Hamiltonian system of equations consisting of a conditional Hamiltonian, and  coupled forward-backward stochastic differential equations. 

 The work  in \cite{charalambous-ahmedFIS_Parti2012}  compliments the current body of knowledge on  static team game theory  \cite{marschak1955,radner1962,marschak-radner1972,krainak-speyer-marcus1982a,waal-vanschuppen2000}, and decentralized decision making \cite{witsenhausen1968,witsenhausen1971,ho-chu1972,kurtaran-sivan1973,sandell-athans1974,kurtaran1975,varaiya-walrand1978,ho1980,bagghi-basar1980,krainak-speyer-marcus1982a,krainak-speyer-marcus1982b,waal-vanschuppen2000}, and more recent work in \cite{bamieh-voulgaris2005,aicardi-davoli-minciardi1987,nayyar-mahajan-teneketzis2011,vanschuppen2011,lessard-lall2011,mahajan-martins-rotkowitz-yuksel2012},  by introducing optimility conditions for general stochastic nonlinear differential systems.  

 The main remaining challenge is to determine whether under the formulation and assumptions introduced in \cite{charalambous-ahmedFIS_Parti2012}, we can derive optimal decentralized strategies for nonlinear and linear distributed stochastic differential systems,  understand the computational complexity of these strategies compared to centralized strategies,  and determine how this complexity can be reduced by allowing limited signaling among the different decision makers. 

 Therefore, in this second part of the two-part investigation, we apply the optimality conditions derived in the first part to a variety of linear and nonlinear distributed stochastic differential systems with decentralized noiseless information structures to derive optimal strategies. Our  investigation leads to the following conclusions.

\ben
\item When the dynamics are linear in the decision variables and  nonlinear in the state variables, and the pay-off is quadratic in the decision variable and nonlinear in the state variable, the optimal decentralized strategies are given in terms of conditional expectations with respect to  the information structure on which they act on;  

\item When the dynamics are linear in the state and the decision variables, and the pay-off is quadratic in the state and the decision variables, then the optimal decentralized strategies are computed in closed form, much as in the classical Linear-Quadratic Theory. However, when the pay-off includes coupling between the decision makes the optimal strategy of any player is also a function of the average value of the optimal strategies of the other players. 

\item The computation of the optimal strategies involves the solution of certain equations, which can be formulated and solved via fixed point methods. 

\item The computation complexity of the optimal  decentralized strategies can be reduced by  signaling specific information among the decision makers and/or by considering certain structure for the distributed system and pay-off.
\een

The rest of the paper is organized as follows. In Section~\ref{formulation},  we introduce  the distributed stochastic system with decentralized information structures and the main assumption, and we state  the optimality conditions derived in  \cite{charalambous-ahmedFIS_Parti2012}. In Section~\ref{classes},  we apply to optimality conditions to several forms of team games, and we show how the optimal decentralized strategies are computed. For the case of linear differential dynamics and quadratic pay-off we obtain explicit expressions of the optimal decentralized team strategies.   The  paper is concluded with some comments on possible extensions of our results.

 \section{Team and Person-by-Person Optimality Conditions}
 \label{formulation}

\noi In this section we introduce  the mathematical formulation of distributed stochastic  systems with decentralized noiseless  information structures, and the optimality conditions derived in \cite{charalambous-ahmedFIS_Parti2012}. 

\noi The formulation in \cite{charalambous-ahmedFIS_Parti2012}  presupposes a fixed probability space with filtration,   $\Big(\Omega,{\mathbb F},  \{ {\mathbb F}_{0,t}:   t \in [0, T]\}, {\mathbb P}\Big)$  satisfying the usual conditions, that is,  $(\Omega,{\mathbb F}, {\mathbb P})$ is complete, ${\mathbb F}_{0,0}$ contains all ${\mathbb P}$-null sets in ${\mathbb F}$. All $\sigma-$algebras are assumed complete and right continuous, that is,   ${\mathbb F}_{0,t} = {\mathbb F}_{0,t+} \tri \bigcap_{s>t} {\mathbb F}_{0,s}, \forall t \in [0,T)$. We use the notation  ${\mathbb F}_T \tri  \{ {\mathbb F}_{0,t}:   t \in [0, T]\}$ and similarly for the rest of the filtrations. \\  

\noi The minimum principle in  \cite{charalambous-ahmedFIS_Parti2012} is derived utilizing the  following spaces. Let $L_{{\mathbb F}_T}^2([0,T],{\mathbb R}^n) \subset   L^2( \Omega \times [0,T], d{\mathbb P}\times dt,  {\mathbb R}^n) \equiv L^2([0,T], L^2(\Omega, {\mathbb R}^n)) $ denote the space of ${\mathbb F}_T-$adapted random processes $\{z(t): t \in [0,T]\}$   such that
\bes
{\mathbb  E}\int_{[0,T]} |z(t)|_{{\mathbb R}^n}^2 dt < \infty,
\ees   
 which is a sub-Hilbert space of     $L^2([0,T], L^2(\Omega, {\mathbb R}^n))$.      
  Similarly, let $L_{{\mathbb F}_T}^2([0,T],  {\cal L}({\mathbb R}^m,{\mathbb R}^n)) \subset L^2([0,T] , L^2(\Omega, {\cal L}({\mathbb R}^m,{\mathbb R}^n)))$ denote the space of  ${\mathbb  F}_{T}-$adapted $n\times m$ matrix valued random processes $\{ \Sigma(t): t \in [0,T]\}$ such that
 \bes
  {\mathbb  E}\int_{[0,T]} |\Sigma(t)|_{{\cal L}({\mathbb R}^m,{\mathbb R}^n)}^2 dt  \tri  {\mathbb E} \int_{[0,T]} tr(\Sigma^*(t)\Sigma(t)) dt < \infty.
  \ees

  \subsection{Distributed Stochastic Differential Decision Systems }
\label{deterministic}

A stochastic differential decision or control system is called distributed if it consists of an interconnection of at least two subsystems and decision makers, whose   actions are based on decentralized information structures.  The underlying assumption is that  the decision makers are allowed to  exchange  information on their  law or strategy deployed,  but not their actions. \\
  Let $\Big(\Omega,{\mathbb F},\{ {\mathbb F}_{0,t}:   t \in [0, T]\}, {\mathbb P}\Big)$ denote a fixed complete filtered probability space on which we shall define all processes. At this state we do not specify how $\{{\mathbb F}_{0,t}: t \in [0,T]\}$ came about, but we require that Brownian motions are adapted to this filtration.\\
   
  \noi {\bf Admissible Decision Maker  Strategies}
  
  \noi  The Decision Makers (DM) $\{u^i: i \in {\mathbb Z}_N\}$ take values in a closed convex subset of metric spaces $\{({\mathbb M}^i,d): i \in {\mathbb Z}_N\}$.  Let  ${\cal G}_{T}^i \tri \{  {\cal G}_{0,t}^i: t \in [0, T]\} \subset \{{\mathbb F}_{0,t}: t \in [0,T]\}$ denote the information available to DM $i$, $ \forall i \in {\mathbb Z}_N$.   
     The admissible set  of regular strategies is  defined by
 \begin{align}
 {\mathbb U}_{reg}^i[0, T] \tri \Big\{   u^i   \in  L_{{\cal G}_T^i}^2([0,T],{\mathbb R}^{d_i})  : \:   u_t^i \in {\mathbb A}^i \subset {\mathbb R}^{d_i}, \: a.e. t \in [0,T], \: {\mathbb P}-a.s. \Big\}, \hso  \forall  i \in {\mathbb Z}_N. \label{cs1a}
     \end{align}
 Clearly,   ${\mathbb U}_{reg}^i[0, T]$ is a  closed convex subset of  $L_{{\mathbb F}_T}^2([0,T],{\mathbb R}^n)$, for $i=1,2, \ldots, N$, and    $u^i  : [0,T] \times \Omega \rar {\mathbb A}^i$,  
   $\{u_t^i: t  \in  [0, T] \}$  
    is  ${\cal G}_T^i-$adapted, $\forall  i \in {\mathbb Z}_N$.\\
     An $N$ tuple of DM strategies    is by definition $(u^1,u^2, \ldots, u^N) \in {\mathbb U}_{reg}^{(N)}[0,T] \tri \times_{i=1}^N {\mathbb U}_{reg}^i[0,T]$.\\


 \noi {\bf Distributed Stochastic  Systems}

  \noi  On the probability space $\Big(\Omega,{\mathbb F},\{ {\mathbb F}_{0,t}:   t \in [0, T]\}, {\mathbb P}\Big)$ the distributed stochastic system consists of an interconnection of $N$ subsystems, and  each subsystem $i$ has,  state space ${\mathbb R}^{n_i}$, action space ${\mathbb A}^i \subset {\mathbb R}^{d_i}$, an exogenous   noise space $ {\mathbb R}^{m_i}$, and an  initial state $x^i(0)=x_0^i$, identified by the following quantities.
\begin{description}
\item[(S1)] $x^i(0)=x_0^i$:  an ${\mathbb R}^{n_i}$-valued  Random Variable;

\item[(S2)] $\{ W^i(t): t \in [0,T]\}$: an  ${\mathbb R}^{m_i}$-valued  standard Brownian motion which models the exogenous state noise, adapted to ${\mathbb F}_T$, independent of $x^i(0)$;
\end{description}
Each subsystem is described by  coupled stochastic differential equations of It\^o type as follows. 
 \begin{align}
 dx^i(t) =&  f^i(t,x^i(t),u_t^i) dt  +\sigma^i(t,x^i(t),u_t^i)dW^i(t)
 + \sum_{j=1, j \neq i}^N f^{ij}(t,x^j(t),u_t^j)dt  \nonumber \\
 &+\sum_{j=1, j \neq i}^N \sigma^{ij}(t,x^j(t),u_t^j)dW^j(t) , \hst x^i(0) = x_0^i, \hso  t \in (0,T], \hso i \in {\mathbb Z}_N. \label{eq1ds}
 \end{align}  
Define the augmented vectors by
\bes
 W \tri (W^1, W^2, \ldots, W^{N}) \in {\mathbb R}^m, \hso u\tri (u^1, u^2, \ldots, u^{N}) \in {\mathbb R}^d, \hso x \tri (x^1, x^2, \ldots, x^{N}) \in {\mathbb R}^n.
 \ees
  \\  
The distributed   system  is described in compact form by
 \begin{eqnarray}
 dx(t) =  f(t,x(t),u_t) dt + \sigma(t,x(t),u_t)~dW(t), \hst x(0) = x_0, \hst  t \in (0,T], \label{eq1}
 \end{eqnarray}  
 where $f: [0,T] \times {\mathbb R}^n\times {\mathbb A}^{(N)} \longrightarrow {\mathbb R}^n$ denotes the drift and $\sigma : [0,T] \times {\mathbb R}^n\times {\mathbb  A}^{(N)} \longrightarrow {\cal L}({\mathbb R}^m, {\mathbb R}^n)$ the diffusion coefficients. \\
 
   \noi {\bf Pay-off Functional}
   
  \noi Given a $u \in {\mathbb U}_{reg}^{(N)}[0,T]$ and (\ref{eq1ds}) we  define the reward or performance criterion by 
 \begin{align} 
 J(u) \equiv  J(u^1,u^2,\ldots,u^N)   \tri
    {\mathbb E} \biggl\{   \int_{0}^{T}  \ell(t,x(t),u_t) dt + \varphi(x(T)\biggr\},            \label{cfd}
  \end{align} 
  where $\ell: [0,T] \times {\mathbb R}^n\times {\mathbb U}^{(N)} \longrightarrow (-\infty, \infty]$ denotes the running cost function  and $\varphi : {\mathbb R}^n \longrightarrow (-\infty, \infty]$,  the terminal cost function.  

\subsection{Team and Person-by-Person Optimality}
\label{tpbp}
In this section we give the precise definitions of team and person-by-person optimality for regular strategies.\\

We consider the following information structures.\\

\noi{ \bf (NIS): Nonanticipative Information Structures.}   $u^i$ is adapted to the filtration
  ${\cal G}_T^i \subset {\mathbb F}_T$  generated by the $\sigma-$algbebra of nonlinear nonanticipative measurable functionals of any combination of the subsystems Brownian motions $\{(W^1(t),W^2(t), \ldots, W^N(t)): t \in [0,T]\}, \forall i \in {\mathbb Z}_N$. \\
  This is often called open loop information, and  it is the one used in classical stochastic control with centralized full information  to derive the maximum principe \cite{yong-zhou1999}. \\

\noi{ \bf (FIS): Feedback Information Structures.} $u^i$ is adapted   
to the filtration ${\cal G}_T^{z^{i,u}}$ generated by the $\sigma-$algebra   ${\cal G}_{0,t}^{z^{i,u}} \tri  \sigma\{ z^{i}(s): 0 \leq s \leq t\}, t \in [0,T]$, where  the observables $z^i$ are nonlinear nonanticipative measurable functionals of any combination of the states defined  by
\bea
z^i(t) = h^i(t,x), \hst h^i : [0,T] \times C([0,T], {\mathbb R}^n)  \longrightarrow {\mathbb R}^{k_i}, \hso i \in {\mathbb Z}_N. \label{eq1aa}
\eea
Note that the index $u$ emphasizes the fact that feedback strategies depend on $u$. \\
The set of admissible regular  feedback strategies is defined by 
\bea
{\mathbb U}_{reg}^{(N),z^u}[0,T] \tri \Big\{u \in  {\mathbb U}_{reg}^{(N)}[0,T]: u_t^i \hso \mbox{is} \hso {\cal G}_{0,t}^{z^{i,u}}-\mbox{measurable},  t \in [0,T], \hso i=1,\ldots, N\Big\}. \label{fstrategies1}
\eea

 \begin{problem}(Team Optimality)
 \label{problem1}
Given the  pay-off functional (\ref{cfd}),   constraint (\ref{eq1})   the  $N$ tuple of  strategies   $u^o \tri (u^{1,o}, u^{2,o}, \ldots, u^{N,o}) \in {\mathbb U}_{reg}^{(N)}[0,T]$  is called nonanticipative team optimal if it satisfies  
 \bea
 J(u^{1,o}, u^{2,o}, \ldots, u^{N,o}) \leq J(u^1, u^2, \ldots, u^N),  \hst \forall  u\tri (u^1, u^2, \ldots, u^N) \in {\mathbb U}_{reg}^{(N)}[0,T] \label{cfd1a}
 \eea
 Any $u^o   \in {\mathbb U}_{reg}^{(N)}[0,T]$ satisfying (\ref{cfd1a})
is called an optimal   decision strategy (or control) and the corresponding $x^o(\cdot)\equiv x(\cdot; u^o(\cdot))$ (satisfying (\ref{cfd})) is called an optimal state process.
Similarly,  feedback 
team optimal strategies  are defined with respect to  $u^o  \in {\mathbb U}_{reg}^{(N),z^u}[0,T]$. 
  \end{problem}
 
  An alternative approach to handle such problems with decentralized information structures is to restrict the definition of optimality to   the so-called person-by-person equilibrium.  \\
 Define 
 \bes
 \tilde{J}(v,u^{-i})\tri J(u^1,u^2,\ldots, u^{i-1},v,u^{i+1},\ldots,u^N)
 \ees

 \begin{problem}(Person-by-Person Optimality)
 \label{problem2}
 Given the   pay-off functional (\ref{cfd}),   constraint (\ref{eq1})   the  $N$ tuple of  strategies   $u^o \tri (u^{1,o}, u^{2,o}, \ldots, u^{N,o}) \in {\mathbb U}_{reg}^{(N)}[0,T]$  is called nonanticipative person-by-person optimal  if it satisfies
\begin{align}
 \tilde{J}(u^{i,o}, u^{-i,o}) \leq \tilde{J}(u^{i}, u^{-i,o}), \hst \forall u^i \in {\mathbb  U}_{reg}^i[0,T], \hso \forall i \in {\mathbb Z}_N. \label{cfd2}
 \end{align}
 Similarly,  feedback 
 person-by-person optimal strategies are defined with respect to  $u^o \in {\mathbb U}_{reg}^{(N),z^u}[0,T]$. 
  \end{problem}

 \noi   Conditions (\ref{cfd2}) are analogous to the Nash equilibrium strategies of team games consisting of a single pay-off and $N$ DM. The person-by-person optimal strategy states that none of the $N$ DM with different  information structures can deviate unilaterally from the optimal strategy and gain by doing so.

   \subsection{Team and Person-by-Person Optimality Conditions}
  \label{mp}

In this section we first introduce the assumptions on $\{f,\sigma,h,\ell,\varphi\}$ and then we state the optimality conditions derived in \cite{charalambous-ahmedFIS_Parti2012}. 
   
\noi Let $B_{{\mathbb F}_T}^{\infty}([0,T], L^2(\Omega,{\mathbb R}^n))$ denote the space of ${\mathbb F}_T$-adapted ${\mathbb R}^n$ valued second order random processes endowed with the norm topology  $\parallel  \cdot \parallel$ defined by  
\bes
 \parallel x\parallel^2  \tri \sup_{t \in [0,T]}  {\mathbb E}|x(t)|_{{\mathbb R}^n}^2.
 \ees
The main assumptions are stated below.

\begin{assumptions}(Main assumptions)\\
\label{NCD1}  
${\mathbb U}^i$ is closed and convex subset of ${\mathbb R}^{d_i}$, $\forall  i\in {\mathbb Z}_N$, ${\mathbb E}|x(0)|_{{\mathbb R}^n} < \infty$ and the maps of $\{f,\sigma,\ell, \varphi\} $ satisfy the following conditions.

\begin{description}

\item[(A1)] $f: [0,T] \times {\mathbb R}^n \times {\mathbb A}^{(N)} \longrightarrow {\mathbb R}^n$ is continuous in $(t,x,u)$ and continously differentiable with respect to $x,u$;

\item[(A2)] $\sigma: [0,T] \times {\mathbb R}^n \times {\mathbb A}^{(N)} \longrightarrow {\cal L}({\mathbb R}^m; {\mathbb R}^n)$ is continuous in $(t,x,u)$ and continously differentiable with respect to $x,u$;

\item[(A3)] The first derivatives of $\{f_x, \sigma_x, f_u, \sigma_u \}$ are bounded  uniformly on $[0,T] \times {\mathbb R}^n \times {\mathbb A}^{(N)}$.

\item[(A4)] $\ell: [0,T] \times {\mathbb R}^n \times {\mathbb A}^{(N)} \longrightarrow (-\infty, \infty]$ is Borel measurable, continuously differentiable with respect to  $(x,u)$,  $\varphi: [0,T] \times {\mathbb R}^n \longrightarrow (-\infty, \infty]$ is continously differentiable with respect to $x$, $\ell(0,0,t)$ is bounded, and there exist $K_1, K_2 >0$ such that
\bes
|\ell_x(t,x,u)|_{{\mathbb R}^n}+|\ell_u(t,x,u) |_{{\mathbb R}^d}       \leq K_1 \big(1+|x|_{{\mathbb R}^n} + |u|_{{\mathbb R}^d} \big),  \hso |\varphi_x(x)|_{{\mathbb R}^n} \leq K_2 \big(1+ |x|_{{\mathbb R}^n}\big).
\ees
\end{description}

\end{assumptions}

\noi The following lemma states  existence of solutions and their continuous dependence on the decision variables.

\begin{lemma}
\label{lemma3.1}
 Suppose Assumptions~\ref{NCD1} hold. Then for any ${\mathbb  F}_{0,0}$-measurable initial state $x_0$ having finite second moment, and any $u \in {\mathbb U}_{reg}^{(N)}[0,T]$,  the following hold.

\begin{description}

 \item[(1)]  System (\ref{eq1}) has a unique solution   $x \in B_{{\mathbb F}_T}^{\infty}([0,T],L^2(\Omega,{\mathbb R}^n))$  having a continuous modification, that is, $x \in C([0,T],{\mathbb R}^n)$,  ${\mathbb P}-$a.s, $\forall i \in {\mathbb Z}_N$.

\item[(2)]  The solution of  system  (\ref{eq1}) is continuously dependent on the control, in the sense that, as $u^{i, \alpha} \longrightarrow u^{i,o}$  in ${\mathbb U}_{reg}^i[0,T]$, $\forall i \in {\mathbb Z}_N$,  $x^\alpha \buildrel s \over\longrightarrow x^o $ in $B_{{\mathbb F}_T}^{\infty}([0,T],L^2(\Omega,{\mathbb R}^n)), \forall i \in {\mathbb Z}_N$.
\end{description}
These statements also hold for feedback  strategies $u \in {\mathbb U}_{reg}^{(N),z^u}[0,T]$. 

\end{lemma}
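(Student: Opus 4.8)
The plan is to treat the compact system (\ref{eq1}) as a standard It\^o stochastic differential equation and invoke the classical existence/uniqueness theory, checking that the Main Assumptions~\ref{NCD1} supply exactly the Lipschitz and linear-growth conditions that theory requires. First I would observe that although the individual subsystem dynamics (\ref{eq1ds}) are coupled across indices $i$, once assembled into the augmented vectors $x, W, u$, the coupling is absorbed into the aggregate drift $f$ and diffusion $\sigma$ appearing in (\ref{eq1}). So the multi-agent structure is cosmetic for the purposes of this lemma, and what matters is the regularity of $f$ and $\sigma$ in $(t,x,u)$. From (A1)--(A3), $f$ and $\sigma$ are continuously differentiable in $(x,u)$ with derivatives $\{f_x,\sigma_x,f_u,\sigma_u\}$ bounded uniformly on $[0,T]\times{\mathbb R}^n\times{\mathbb A}^{(N)}$; by the mean value theorem this yields a global Lipschitz condition in $(x,u)$ uniformly in $t$, hence also the linear-growth bound $|f(t,x,u)|+|\sigma(t,x,u)|\le K(1+|x|+|u|)$.

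For part (1), I would apply the standard fixed-point (Picard iteration) argument in the Banach space $B_{{\mathbb F}_T}^{\infty}([0,T],L^2(\Omega,{\mathbb R}^n))$: define the map sending a candidate process to the right-hand side of the integral form of (\ref{eq1}), use the Lipschitz property together with the It\^o isometry and the Burkholder--Davis--Gundy inequality to show this map is a contraction (after possibly iterating or using a weighted norm over $[0,T]$), and conclude a unique fixed point. Since $u\in{\mathbb U}_{reg}^{(N)}[0,T]$ is square-integrable and $x_0$ has finite second moment, the linear-growth bound with Gr\"onwall's inequality gives $\sup_{t}{\mathbb E}|x(t)|^2<\infty$, placing $x$ in $B_{{\mathbb F}_T}^{\infty}$; continuity of the sample paths (the continuous modification in $C([0,T],{\mathbb R}^n)$) follows from continuity of the drift integral and of the stochastic integral against Brownian motion.

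For part (2), continuous dependence, I would let $u^{i,\alpha}\to u^{i,o}$ in ${\mathbb U}_{reg}^i[0,T]$ for each $i$, write the difference $x^\alpha-x^o$ in integral form, and estimate ${\mathbb E}|x^\alpha(t)-x^o(t)|^2$ using the Lipschitz bounds in both the state and the control arguments. The control-difference terms are controlled by $\|u^\alpha-u^o\|^2$, which vanishes by hypothesis, and the state-difference terms are absorbed by Gr\"onwall's inequality; a supremum-over-$t$ version (again via BDG for the martingale part) upgrades this to strong convergence in the norm of $B_{{\mathbb F}_T}^{\infty}$. Finally, the feedback case $u\in{\mathbb U}_{reg}^{(N),z^u}[0,T]$ requires no new argument, since by (\ref{fstrategies1}) and (\ref{fstrategies3}) these are a subset of the regular strategies and the estimates above never used the particular measurability structure, only square-integrability.

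The main obstacle, and the only place demanding care, is the diffusion coefficient's dependence on the control in the continuous-dependence estimate: the stochastic integral term $\int_0^t[\sigma(s,x^\alpha,u^\alpha)-\sigma(s,x^o,u^o)]\,dW$ must be handled via It\^o isometry and split into a state-Lipschitz piece (closed by Gr\"onwall) and a control-Lipschitz piece (closed by the assumed convergence $\|u^\alpha-u^o\|\to0$). Because (A3) bounds $\sigma_u$ uniformly, this split goes through cleanly, but it is the step where the multiplicative noise structure (as opposed to additive noise) could otherwise cause trouble, so I would isolate it explicitly.
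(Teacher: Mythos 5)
Your proposal is correct and follows the standard Picard--Gr\"onwall route that the paper itself delegates to \cite{ahmed-charalambous2012a} with the one-line remark ``Proof is identical to that of [that reference]''; deriving the global Lipschitz and linear-growth bounds from the uniformly bounded derivatives in (A3), running the contraction argument in $B_{{\mathbb F}_T}^{\infty}([0,T],L^2(\Omega,{\mathbb R}^n))$, and closing the continuous-dependence estimate with It\^o isometry, BDG and Gr\"onwall is exactly the intended argument. Your observation that the feedback case needs nothing new is also consistent with the paper, since by (\ref{fstrategies1}) the set ${\mathbb U}_{reg}^{(N),z^u}[0,T]$ is defined as a subset of ${\mathbb U}_{reg}^{(N)}[0,T]$ and the estimates use only square-integrability and adaptedness of $u$.
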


\begin{proof} 	Proof is identical to that of  \cite{ahmed-charalambous2012a}.
\end{proof}

\noi Note that the differentiability of $f, \sigma, \ell$ with respect to $u$ can be removed without affecting the results (by considering either needle variations when deriving the maximum principle or by deriving the maximum principle for relaxed strategies and then specializing it to regular strategies as  in \cite{charalambous-ahmedFIS_Parti2012}).

\noi Assumptions~\ref{NCD1}  are used to  derive optimality conditions for stochastic control problems with  nonanticipative centralized  strategies. However, for stochastic control problems with feedback centralized strategies additional assumptions are required to avoid certain technicalities associated with the derivation of the maximum principle. In \cite{charalambous-ahmedFIS_Parti2012} we identified these assumptions for decentralized randomized feedback strategies;  the main theorems  are stated below.

\begin{assumptions} 
\label{a-nf}
The following holds.

\begin{description}
\item[(E1)] The diffusion coefficients $\sigma$  is restricted to   the   map $\sigma: [0,T] \times {\mathbb R}^n \longrightarrow {\cal L}({\mathbb R}^n, {\mathbb R}^n)$ 
(e.g., it is independent of $u$) and $\sigma(\cdot,\cdot)$ and $\sigma^{-1}(\cdot, \cdot)$  are bounded.
\end{description}
\end{assumptions} 

\noi Define the $\sigma-$algebras 
\bes
{\cal F}_{0,t}^{x(0),W} \tri \sigma\{x(0), W(s): 0\leq s \leq t\}, \hst   {\cal F}_{0,t}^{x^u} \tri \sigma\{x(s): 0 \leq s \leq t\}, \hst \forall t \in [0,T].
\ees
Under Assumptions~\ref{NCD1}, \ref{a-nf}, if  $u \in {\mathbb U}_{reg}^{(N),z^{u}}[0,T]$  then ${\cal F}_{0,t}^{x(0),W} = {\cal F}_{0,t}^{x^u}, \forall t \in [0,T]$. Thus,   for any $u^i \in {\mathbb U}_{reg}^{z^{i,u}}[0,T]$ which is ${\cal G}_T^{z^{i,u}}-$adapted there exists a function $\phi^i(\cdot)$ measurable to a sub-$\sigma-$algebra of ${\cal F}_{0,t}^i \subset {\cal F}_{0,t}^{x(0),W}$ such that $u_t^i( \omega)= \phi^i(t, x(0), W(\cdot \bigwedge t,\omega)), {\mathbb P}-a.s. \: \omega \in {\Omega}, \forall t \in [0,T], i=1,\ldots N$. 
Define all such adapted nonanticipative functions by 
\begin{align}
\overline{\mathbb U}_{reg}^{i}[0, T] \tri \Big\{   u^i   \in  L_{{\cal F}_{T}^{ i}}^2([0,T],{\mathbb R}^{d_i})  : \:   u_t^i \in  {\mathbb U}_{reg}^{i,z^{i,u}}[0, T]     \Big\}, \: \forall  i \in {\mathbb Z}_N.\label{na1}
     \end{align}

\noi Next, we introduce the following additional assumptions.

\begin{assumptions}
\label{a-nf1r}
The following hold. 
\begin{description}
\item[(E2)] ${\mathbb U}_{reg}^{z^{i,u}}[0, T]$ is dense in  $\overline{\mathbb U}_{reg}^{i}[0, T], \forall i \in {\mathbb Z}_N$.

\end{description}
\end{assumptions} 

\noi Under Assumptions~\ref{NCD1} it can be shown that $J(\cdot)$ is continuous in the sense of $\overline{\mathbb U}_{reg}^{(N)}[0,T]$ and by Assumptions~\ref{a-nf1r} we have  $\inf_{ u \in  \times_{i=1}^N \overline{\mathbb U}_{reg}^{i}[0,T]} J(u)=  \inf_{ u \in \times_{i=1}^N {\mathbb U}_{reg}^{z^{i,u}}[0,T]} J(u)$. Hence,  the  necessary conditions for feedback information structures $u \in {\mathbb U}_{reg}^{(N),z^{u}}[0,T]$ to be optimal are those for which nonanticipative information structures $u \in \overline{\mathbb U}_{reg}^{(N)}[0,T]$ are optimal.

\noi We now show  that under Assumptions~\ref{NCD1}, \ref{a-nf} then   Assumptions~\ref{a-nf1r} holds.

\begin{theorem}
\label{thm-nf1}
Consider Problem~\ref{problem1} under Assumptions~\ref{NCD1}, \ref{a-nf}.  
Then   $$\inf_{ u \in  \times_{i=1}^N \overline{\mathbb U}_{reg}^{i}[0,T]} J(u)=  \inf_{ u \in \times_{i=1}^N {\mathbb U}_{reg}^{z^{i,u}}[0,T]} J(u).$$
\end{theorem}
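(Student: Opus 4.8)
The plan is to promote Assumptions~\ref{a-nf1r} from a hypothesis to a consequence by combining the continuity of the cost functional $J$ with a density argument, the two infima then being forced to agree. I would first isolate the two facts everything rests on. By Lemma~\ref{lemma3.1}(2), under Assumptions~\ref{NCD1} the state map $u \mapsto x$ is continuous from $L^2$ into $B_{{\mathbb F}_T}^\infty$, and together with the growth bounds (A4) on $\ell,\varphi$ this makes $J(\cdot)$ continuous on the ambient space $\times_{i=1}^N L_{{\cal F}_T^i}^2([0,T],{\mathbb R}^{d_i})$, in particular on both feasible sets in the statement. Second, under Assumption~\ref{a-nf} (E1), since $\sigma$ is independent of $u$ and $\sigma^{-1}$ is bounded, the state equation (\ref{eq1}) is invertible pathwise: for any admissible feedback $u$,
\bes
W(t) = \int_0^t \sigma^{-1}(s, x(s)) \big( dx(s) - f(s, x(s), u_s)\, ds \big),
\ees
whose right-hand side is a nonanticipative functional of the trajectory $x$. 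Hence ${\cal F}_{0,t}^{x(0),W} = {\cal F}_{0,t}^{x^u}$ for all $t$, and crucially this common filtration does \emph{not} depend on $u$.

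I would use this filtration identity to get one inequality for free. It furnishes, for each feedback strategy $u^i \in {\mathbb U}_{reg}^{z^{i,u}}[0,T]$, a nonanticipative representation $u_t^i = \phi^i(t, x(0), W(\cdot \wedge t))$, so that after this identification ${\mathbb U}_{reg}^{z^{i,u}}[0,T] \subset \overline{\mathbb U}_{reg}^i[0,T]$. Taking products over $i$ and passing to infima, the inclusion of feasible sets yields immediately
\bes
\inf_{u \in \times_{i=1}^N \overline{\mathbb U}_{reg}^i[0,T]} J(u) \le \inf_{u \in \times_{i=1}^N {\mathbb U}_{reg}^{z^{i,u}}[0,T]} J(u).
\ees

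The reverse inequality is the substantive step and is where the density must actually be built. Fix $\epsilon>0$ and pick $\bar u$ in the product of the $\overline{\mathbb U}_{reg}^i[0,T]$ with $J(\bar u) \le \inf J + \epsilon$. I would then approximate each component $\bar u^i$, in $L_{{\cal F}_T^i}^2$, by strategies measurable with respect to the observation filtration ${\cal G}_T^{z^{i,u}}$. The mechanism is again the inversion above: because $x$ and $(x(0),W)$ generate the same $u$-independent $\sigma$-algebras, every element of $\overline{\mathbb U}_{reg}^i[0,T]$ is, at the level of generated information, a functional of the state, and the approximation reduces to the classical density of simple adapted processes measurable with respect to DM~$i$'s (now frozen) information. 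This yields $u^{i,\alpha} \in {\mathbb U}_{reg}^{z^{i,u}}[0,T]$ with $u^{i,\alpha} \to \bar u^i$; continuity of $J$ gives $J(u^\alpha) \to J(\bar u)$, so that $\inf_{{\mathbb U}^z} J \le J(\bar u) + o(1) \le \inf_{\overline{\mathbb U}} J + \epsilon$. Letting $\epsilon \downarrow 0$ and combining with the previous display gives the claimed equality.

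The main obstacle I anticipate is exactly this density step, because the observation filtration ${\cal G}_T^{z^{i,u}}$ carries the index $u$: the space one approximates in and the trajectories generating the approximating $\sigma$-algebras both move with the strategy, so the statement is genuinely a self-consistency (fixed-point) assertion rather than an off-the-shelf density-in-$L^2$ fact. Assumption~\ref{a-nf} (E1) is precisely what decouples this circularity, since invertibility of $\sigma$ pins the generated filtration to the $u$-independent Brownian filtration. Some care is also needed to keep the approximants inside the closed convex constraint sets ${\mathbb A}^i$, which I would ensure by composing with the nonexpansive projection onto ${\mathbb A}^i$ so that $L^2$-convergence is preserved.
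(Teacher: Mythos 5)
Your overall architecture matches the paper's: the easy inclusion $\inf_{\overline{\mathbb U}} \le \inf_{{\mathbb U}^{z}}$ from the identification of feedback strategies with nonanticipative functionals, plus continuity of $J$ (which you and the paper both establish via Lemma~\ref{lemma3.1}(2), the growth bounds in (A4), and a mean-value estimate), reducing everything to the density of $\times_i {\mathbb U}_{reg}^{z^{i,u}}[0,T]$ in $\times_i \overline{\mathbb U}_{reg}^{i}[0,T]$. You also correctly diagnose that the density step is a self-consistency assertion, because the approximant must be measurable with respect to the observation filtration generated by \emph{its own} trajectory. But at exactly that point your argument has a genuine gap: you assert that (E1) ``decouples the circularity'' and that the approximation ``reduces to the classical density of simple adapted processes,'' which presupposes that any ${\cal F}_{0,t}^{x(0),W}$-adapted control is automatically a feedback control of its own state. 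That is the conclusion, not a given. The inversion $W(t)=\int_0^t \sigma^{-1}(s,x(s))(dx(s)-f(s,x(s),u_s)\,ds)$ requires knowing $u_s$ to extract $W$ from $x$; for a general nonanticipative $u_s=\phi(s,x(0),W(\cdot\wedge s))$ one needs $W$ to compute $u_s$, so the identity ${\cal F}_{0,t}^{x(0),W}={\cal F}_{0,t}^{x^u}$ cannot be invoked for the approximants without further structure. A generic simple Brownian-adapted process gives you no way to break this loop.

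The paper (following Bensoussan) supplies precisely the missing construction: given $u\in\overline{\mathbb U}_{reg}^{(N)}[0,T]$ and $k=T/M$, it sets $u_{k,t}^i$ equal to a constant on $[0,k)$ and to the average $\frac{1}{k}\int_{(n-1)k}^{nk}u_s^i\,ds$ on $[nk,(n+1)k)$. The deliberate one-interval delay is what breaks the circularity: on $[0,k)$ the control is deterministic, so $x_k$ on $[0,k]$ determines $W$ on $[0,k]$ via the inversion; then $u_{k,t}$ on $[k,2k)$ depends only on $W$ restricted to $[0,k]$, hence is ${\cal F}_{0,t}^{x_k^u}$-measurable, which extends the inversion to $[k,2k]$; induction gives ${\cal F}_{0,t}^{x(0),W}={\cal F}_{0,t}^{x_k^u}$ on all of $[0,T]$, so $u_k$ genuinely lies in ${\mathbb U}^{(N),z^{u_k}}[0,T]$, while $u_k\to u$ in $L^2$. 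Your proposal contains no analogue of this delayed-averaging approximant, and without it the density claim is unsupported. (Your remark about projecting onto ${\mathbb A}^i$ is sensible but secondary; note the averaging construction preserves convex constraint sets automatically.)
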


\begin{proof} We follow the procedure in \cite{bensoussan1981}.  For any $u^i \in {\mathbb U}_{reg}^{z^{i,u}}[0,T]$ which is ${\cal G}_T^{z^{i,u}}-$adapted we can define the set $\overline{\mathbb U}_{reg}^i [0,T], i=1, \ldots, N$ via (\ref{na1}). Let $u \in \overline{\mathbb U}_{reg}^{(N)}[0,T] \tri \times_{i=1}^N \overline{\mathbb U}_{reg}^i[0,T]$ and for $k =\frac{T}{M}$, define
\bea
u_{k,t}^i = \left\{ \begin{array}{cccc} u_0^i & \mbox{for} & 0 \leq t <k & u_0 \in {\mathbb A}^{i} \\
\frac{1}{k} \int_{(n-1)k}^{nk} u_s^i ds & \mbox{for} & nk \leq t  (n+1)k, & n=1,\ldots, M-1, \end{array} \right.
\eea
for $i=1, \ldots N$.
Then $u_{k,t} \equiv (u_{k,t}^1, \ldots, u_{k,t}^N) \in \overline{\mathbb U}_{reg}^{(N)}[0,T] $, and $u_k \longrightarrow u$ in $
L_{{\cal F}_T}^2([0,T], {\mathbb R}^d )$. We need to show that $u_k \in {\mathbb U}^{(N),z^{u_k}}[0,T]$. Let $x_k(\cdot)$ denote the trajectory corresponding to $u_{k,\cdot}$, and ${\cal F}_{0,t}^{x_k^u}$ the $\sigma-$algebra generated by $\{x_k(s): 0 \leq s \leq t\}$. Define
\begin{align}
I_k(t) \tri \int_{0}^t \sigma(s,x_k(s))  dW(t)  
=& x_k(t) -x(0) -\int_{0}^t f(s, x_k(s), u_{k,s}) ds, \label{dc1}
\end{align}
and
\bea
W(t) = \int_{0}^t \sigma(s,x_k(s))^{-1} d I_k(s). \label{dc2}
\eea
Since $u_k \in \overline{\mathbb U}_{reg}^{(N)}[0,T] $, then $I_k(t)$ is ${\cal F}_{0,t}^{x_k^u}-$measurable, for $0 \leq t <k$. Hence,
\bea
{\cal F}_{0,t}^{x(0),W}= {\cal F}_{0,t}^{x_k^u}, \hst 0\leq t \leq k. \label{dc3}
\eea
Therefore, $u_{k,t}$ is $ {\cal F}_{0,t}^{x_k^u}-$measurable for $k \leq t \leq 2k$.  From the above equations it follows that (\ref{dc3}) also holds for $k \leq t \leq 2k$, and by induction that ${\cal F}_{0,t}^{x(0),W} = {\cal F}_{0,t}^{x_k^u}, \forall t \in [0,T]$. Therefore,  $u_{k,t}^i $ is also measurable with respect to ${\cal F}_{0,t}^{x_k^u}$. Hence, for any $u_t^i$  which is measurable with respect to a nonanticipative functional $z^i=h^i(t,x)$ there exists a nonanticipative functional of $x(0), W$ which realizes it. \\
Now, it is sufficient to show that  as $u^{i, \alpha} \longrightarrow u^{i}$  in $\overline{\mathbb U}_{reg}^i[0,T]$, $\forall i \in {\mathbb Z}_N$,  then $J(u^\alpha) \longrightarrow J(u)$. Utilizing Assumptions~\ref{NCD1} we can  show that ${\mathbb E} \sup_{s \in [0,t] } |x^\alpha(s)-x(s)|_{{\mathbb R}^n}$ converges to zero as $\alpha \longrightarrow 0$, hence it is sufficient to show that $|J(u^\alpha)-J(u)|$ also converges to zero, as $\alpha \longrightarrow 0$. By the mean value theorem we have the following inequality.
\begin{align}
| J(u^\alpha)-J(u)| \leq & K_1 \: {\mathbb E} \Big\{ \int_{[0,T]}^{} \Big( |x^\alpha(t)|_{{\mathbb R}^n} + |u_t^\alpha|_{{\mathbb R}^d} +  |x(t)|_{{\mathbb R}^n} + |u_t|_{{\mathbb R}^d} +1 \Big) \nonumber \\
&.\Big(  |x^\alpha(t)-x(t)|_{{\mathbb R}^n} + |u_t^\alpha-u_t|_{{\mathbb R}^d} \Big)dt\Big\} \nonumber \\
&+ K_2 {\mathbb E} \Big\{ \Big( |x^\alpha(T)|_{{\mathbb R}^n} + |x(T)|_{{\mathbb R}^n} +1 \Big) |x^\alpha(T)-x(t)|_{{\mathbb R}^n} \Big\}. \label{nac}
\end{align}
Since ${\mathbb E} \sup_{s \in [0,t] } |x^\alpha(s)-x(s)|_{{\mathbb R}^n} \longrightarrow 0$ as $\alpha \longrightarrow 0$, then $|J(u^\alpha)-J(u)|$ also converges to zero, as $\alpha \longrightarrow 0$.

\end{proof}

\noi Thus, under the assumptions of  Theorem~\ref{thm-nf1}  if $u \in {\mathbb U}_{reg}^{(N),z^{u}}[0,T]$ achieves the infimum of $J(u)$ then it is also optimal with respect to  $\overline{\mathbb U}_{reg}^{(N)}[0,T] \tri \times_{i=1}^N \overline{\mathbb U}_{rel}^{i}[0,T] $. Consequently, the  necessary conditions for feedback information structures $u \in {\mathbb U}_{reg}^{(N),z^{u}}[0,T]$ to be optimal are those for which nonanticipative information structures $u \in \overline{\mathbb U}_{reg}^{(N)}[0,T]$ are optimal.


\noi In the next remark  we give an example for which Assumptions~\ref{a-nf} hold, and hence Theorem~\ref{thm-nf1} is valid.

\begin{remark}
\label{ss}
Suppose $x^1$ and $x^2$ are governed by the following stochastic differential equations
\begin{align}
dx^1(t)=& f^1(t,x^1(t),u^1(t))dt + \sigma^1(t,x^1(t))dW^1(t), \hst x^1(0)=x_0^1, \label{ss1} \\
dx^2(t)=& f^2(t,x^1(t),x^2(t),u^1(t),u^2(t))dt + \sigma^2(t,x^1(t),x^2(t))dW^2(t), \hst x^2(0)=x_0^2, \label{ss2} \\
z^1(t)=&h^1(t,x^1(t)), \hst z^2(t)=h^2(t,x^1(t),x^2(t)), \hst t \in [0,T], \label{ss3}
\end{align}
where $h^1, h^2$ are measurable, $W^1(\cdot), W^2(\cdot)$ are independent, and $u^1 \in {\mathbb U}_{reg}^{1,z^{1,u}}[0,T], u^2 \in {\mathbb U}_{reg}^{2,z^{1,u}, z^{2,u}}[0,T]$. If we further assume that $\sigma^i(\cdot, \cdot), \sigma^{i,-1}(\cdot,\cdot)$ are bounded, and Assumptions~\ref{NCD1} hold, then ${\cal F}_{0,t}^{x^{1,u}}\tri \sigma\{x^1(s): 0\leq s \leq t\}={\cal F}_{0,t}^{x^1(0),W^1} \tri \sigma\{x^1(0), W(s): 0 \leq s \leq t\}$. Moreover, it can be shown that  ${\cal F}_{0,t}^{x^{1,u}, x^{2,u}} = {\cal F}_{0,t}^{x^1(0), x^2(0), W^1, W^2}$. Then we can find $\overline{\mathbb U}_{reg}^i[0,T], i=1,2$ for which { (E2)} holds, and thus  Theorem~\ref{thm-nf1} is valid. 
\end{remark}

\noi Next, we state the main theorem which gives  necessary and sufficient optimality conditions  for nonanticipative and feedback  decisions.\\
\noi Define the Hamiltonian
\bes
 {\cal  H}: [0, T] \times {\mathbb R}^n\times {\mathbb R}^n\times {\cal L}({\mathbb R}^m,{\mathbb R}^n)\times {\mathbb A}^{(N)} \longrightarrow {\mathbb R},
\ees  
   by  
   \begin{align}
    {\cal H} (t,x,\psi,Q,u) \tri \la f(t,x,u),\psi\ra + tr (Q^*\sigma(t,x,u))
     + \ell(t,x,u),  \hst  t \in  [0, T]. \label{dh1}
    \end{align}
  \noi For any $u \in {\mathbb U}_{reg}^{(N)}[0,T]$, the adjoint process is $(\psi,Q) \in  L_{{\mathbb F}_T}^2([0,T], {\mathbb R}^n)\times L_{{\mathbb F}_T}^2([0,T] ,{\cal L}({\mathbb R}^m,{\mathbb R}^n))$ satisfies the following backward stochastic differential equation
\begin{align} 
d\psi (t)  &= -f_x^{*}(t,x(t),u_t)\psi (t)  dt - V_{Q}(t) dt -\ell_x(t,x(t),u_t) dt + Q(t) dW(t), \hst t \in [0,T),    \nonumber    \\ 
&=- {\cal H}_x (t,x(t),\psi(t),Q(t),u_t) dt + Q(t)dW(t),      \label{adj1a} \\ 
 \psi(T) &=   \varphi_x(x(T))  \label{eq18}  
 \end{align}
  where  $V_{Q} \in L_{{\mathbb F}_T}^2([0,T],{\mathbb R}^n)$ is   given by  $\langle V_{Q}(t),\zeta\rangle = tr (Q^*(t)\sigma_x(t,x(t),u_t; \zeta)), t \in [0,T]$ (e.g., $V_{Q}(t) = \sum_{k=1}^m \Big(\sigma_x^{(k)}(t,x(t),u_t)\Big)^*Q^{(k)}(t), \hst t \in [0,T],$ $\sigma^{(k)}$ is the $kth$ column of $\sigma$, $\sigma_x^{(k)}$ is the  derivative of $\sigma^{(k)}$ with respect to the state, for $k=1, 2, \ldots, m$, $Q^{(k)}$ is the $kth$ column of $Q$).\\ 
The state process satisfies the stochastic differential equation
\begin{align}
dx(t) &=f(t,x(t),u_t)dt + \sigma(t,x(t),u_t)dW(t), \hst t \in (0, T],  \nonumber  \\
        & = {\cal H}_\psi (t,x(t),\psi(t),Q(t),u_t)     dt + \sigma(t,x(t),u_t) dW(t), \label{st1a} \\
        x(0) &=  x_0 \label{st1i}  
 \end{align}

\noi The main theorem is stated below.

\begin{theorem}(Team optimality) 
\label{theorem7.1}
 Consider Problem~\ref{problem1} under Assumptions~\ref{NCD1}, and  assume  existence of an optimal team strategy.
 
\begin{description}

\item[(I)] Suppose ${\mathbb F}_T$ is the filtration generated by $x(0)$ and the Brownian motion $\{W(t): t \in [0,T]\}$.
  
\noi {\bf Necessary Conditions.}    For  an element $ u^o \in {\mathbb U}_{reg}^{(N)}[0,T]$ with the corresponding solution $x^o \in B_{{\mathbb F}_T}^{\infty}([0,T], L^2(\Omega,{\mathbb R}^n))$ to be team optimal, it is necessary  that 
the following hold.

\begin{description}

\item[(1)]  There exists a semi martingale  with the intensity process $({\psi}^o,Q^o) \in  L_{{\mathbb F}_T}^2([0,T],{\mathbb R}^n)\times L_{{\mathbb F}_T}^2([0,T],{\cal L}({\mathbb R}^m,{\mathbb R}^n))$.
 
 \item[(2) ]  The variational inequality is satisfied:

\begin{align}     \sum_{i=1}^N {\mathbb  E} \Big\{ \int_0^T  \la  {\cal H}_{u^i} (t,x^o(t),\psi^o(t), Q^{o}(t), u_t^{o}),u_t^i-u_t^{i,o}\ra dt \Big\}\geq 0, \hst \forall u \in {\mathbb U}_{reg}^{(N)}[0,T]. \label{eqd16}
\end{align}

\item[(3)]  The process $({\psi}^o,Q^o) \in  L_{{\mathbb F}_T}^2([0,T],{\mathbb R}^n)\times L_{{\mathbb F}_T}^2([0,T],{\cal L}({\mathbb R}^m,{\mathbb R}^n))$ is a unique solution of the backward stochastic differential equation (\ref{adj1a}), (\ref{eq18}), such that $u^o \in {\mathbb U}_{reg}^{(N)}[0,T]$ satisfies  the  point wise almost sure inequalities with respect to the $\sigma$-algebras ${\cal G}_{0,t}^i   \subset {\mathbb F}_{0,t}$, $ t\in [0, T], i=1, 2, \ldots, N:$ 

\begin{align} 
\la  {\mathbb E} \Big\{   {\cal H}_{u^i}(t,x^o(t),  &\psi^o(t),Q^o(t),u_t^{o})   |{\cal G}_{0, t}^i \Big\} ,  u^i-u_t^{i,o}\ra \geq  0, \nonumber \\
&  \forall u^i \in {\mathbb A}^i,  a.e. t \in [0,T], {\mathbb P}|_{{\cal G}_{0,t}^i}- a.s., i=1,2,\ldots, N   \label{eqhd35} 
\end{align} 

\noi {\bf Sufficient Conditions.}    Let $(x^o(\cdot), u^o(\cdot))$ denote an admissible state and decision pair and let $\psi^o(\cdot)$ the corresponding adjoint processes. \\
   Suppose the following conditions hold.
   
\begin{description}

\item[(B1)]  ${\cal H}(t, \cdot,\psi,Q, \cdot),   t \in  [0, T]$,  is convex in $(x,u) \in {\mathbb R}^n \times {\mathbb A}^{(N)}$;

\item[(B2)]  $\varphi(\cdot)$,  is convex in $x \in {\mathbb R}^n$.

\end{description}

\end{description}

\noi Then $(x^o(\cdot),u^o(\cdot))$ is optimal if it satisfies (\ref{eqhd35}).
   
\item[(II)] Suppose ${\mathbb F}_T$ is the filtration generated by $x(0)$ and the Brownian motion $\{W(t): t \in [0,T]\}$, and Assumptions~\ref{a-nf1r} hold. The necessary and sufficient conditions for a feedback  element $ u^o \in {\mathbb U}_{reg}^{(N),z^u}[0,T]$ to be optimal are given by the statements under Part {\bf (I)} with ${\cal G}_{0,t}^i$ replaced by ${\cal G}_{0,t}^{z^{i,u}}, \forall t \in [0,T]$. 

\end{description}   
   
\end{theorem}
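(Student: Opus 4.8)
The plan is to establish Theorem~\ref{theorem7.1} in three movements: first derive the variational inequality by a convex-combination (Gateaux) perturbation, then localize it to the decentralized $\sigma$-algebras ${\cal G}_{0,t}^i$ by a conditioning argument, and finally prove the sufficiency half by a direct convexity estimate on the pay-off difference. Throughout I would invoke Lemma~\ref{lemma3.1} for existence, uniqueness, and continuous dependence of the state on the decision, and the standard existence/uniqueness theory for the linear backward equation (\ref{adj1a})--(\ref{eq18}) to produce the adjoint pair $(\psi^o,Q^o)$, which settles item (1).

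For the necessary conditions, I would fix an optimal $u^o \in {\mathbb U}_{reg}^{(N)}[0,T]$ and an arbitrary admissible $u$, and form the convex perturbation $u^\alpha \tri u^o + \alpha(u-u^o)$ for $\alpha \in [0,1]$, which stays admissible since each ${\mathbb U}_{reg}^i[0,T]$ is closed and convex. Writing $x^\alpha$ for the corresponding trajectory, I would compute the variational (first-order) process $y \tri \lim_{\alpha \downarrow 0}(x^\alpha - x^o)/\alpha$, which solves the linearized SDE obtained by differentiating (\ref{eq1}) along the perturbation, with coefficients $f_x, f_u, \sigma_x, \sigma_u$ evaluated at $(x^o,u^o)$; Assumptions~\ref{NCD1} (A1)--(A4) guarantee these derivatives are well behaved and the linearized equation is well posed. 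Then $\frac{d}{d\alpha}J(u^\alpha)\big|_{\alpha=0} \geq 0$ by optimality. The key algebraic step is to apply the It\^o product rule to $\langle \psi^o(t), y(t)\rangle$ and integrate; the backward equation (\ref{adj1a}) is precisely engineered so that the terms in $f_x, \sigma_x, \ell_x$ cancel against the drift of $\psi^o$ and the terminal condition (\ref{eq18}) absorbs $\varphi_x(x^o(T))$, collapsing the Gateaux derivative to the global inequality (\ref{eqd16}). To pass from (\ref{eqd16}) to the pointwise conditional form (\ref{eqhd35}), I would exploit that $u^i$ ranges over all ${\cal G}_T^i$-adapted admissible processes: choosing $u^i = u^{i,o}$ for $j \neq i$ and varying one coordinate, the tower property lets me replace ${\cal H}_{u^i}$ by its conditional expectation given ${\cal G}_{0,t}^i$, and a standard Lebesgue-point plus measurable-selection argument (testing against $u^i = u^{i,o} + \varepsilon \mathbf{1}_A (a - u^{i,o})$ for $A \in {\cal G}_{0,t}^i$ and $a \in {\mathbb A}^i$) upgrades the integrated inequality to the a.e., a.s.\ pointwise statement.

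For sufficiency, assume (B1)--(B2) and let $u^o$ satisfy (\ref{eqhd35}). For any competitor $u$ with trajectory $x$, I would bound $J(u) - J(u^o)$ from below. Convexity of $\varphi$ gives $\varphi(x(T)) - \varphi(x^o(T)) \geq \langle \varphi_x(x^o(T)), x(T)-x^o(T)\rangle = \langle \psi^o(T), x(T)-x^o(T)\rangle$, and convexity of ${\cal H}$ in $(x,u)$ gives a linear lower bound for the running-cost difference in terms of ${\cal H}_x$ and ${\cal H}_{u^i}$. Applying It\^o to $\langle \psi^o(t), x(t)-x^o(t)\rangle$ and using (\ref{adj1a}) again to cancel the state-gradient contributions, the difference reduces to $\sum_i {\mathbb E}\int_0^T \langle {\cal H}_{u^i}(t,x^o,\psi^o,Q^o,u^o), u_t^i - u_t^{i,o}\rangle\,dt$; conditioning on ${\cal G}_{0,t}^i$ and invoking (\ref{eqhd35}) shows each summand is nonnegative, hence $J(u)\geq J(u^o)$.

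Part (II) follows essentially for free: by Theorem~\ref{thm-nf1}, under Assumptions~\ref{a-nf1r} the feedback infimum over ${\mathbb U}_{reg}^{(N),z^u}[0,T]$ equals the nonanticipative infimum over $\overline{\mathbb U}_{reg}^{(N)}[0,T]$, so the necessary and sufficient conditions of Part (I) transfer verbatim with ${\cal G}_{0,t}^i$ replaced by ${\cal G}_{0,t}^{z^{i,u}}$. The main obstacle I anticipate is the localization step (\ref{eqd16})$\Rightarrow$(\ref{eqhd35}): justifying the interchange of conditional expectation with the directional derivative and the measurable-selection argument requires care that the variational class is rich enough to separate points of each ${\cal G}_{0,t}^i$ and that the integrability furnished by Assumptions~\ref{NCD1} (A4) legitimizes every application of Fubini and dominated convergence. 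The It\^o-calculus cancellations, while central, are routine once the adjoint equation is read as the dual of the linearized dynamics.
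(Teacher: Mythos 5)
Your proposal is correct and follows essentially the same route as the paper, which itself gives no in-text argument but defers to the companion Part~I paper \cite{charalambous-ahmedFIS_Parti2012}, where the theorem is proved by exactly this classical scheme: a convex (Gateaux) perturbation exploiting the convexity of each ${\mathbb U}_{reg}^i[0,T]$, It\^o duality between the variational equation and the adjoint backward SDE (\ref{adj1a})--(\ref{eq18}) to obtain (\ref{eqd16}), localization to (\ref{eqhd35}) via ${\cal G}_{0,t}^i$-measurable needle-type perturbations and the tower property, and the Bismut convexity argument under (B1)--(B2) for sufficiency, with Part (II) reduced to Part (I) through Theorem~\ref{thm-nf1}. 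The only points worth making explicit are that item (1) rests on the martingale representation theorem for the Brownian filtration (which is precisely why Part (I) assumes ${\mathbb F}_T$ is generated by $x(0)$ and $W$), and that the first-order expansion in $\sigma_u$ suffices only because the action sets are convex; both are implicit in your sketch.
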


\begin{proof} See \cite{charalambous-ahmedFIS_Partii2012}.

\end{proof}

\noi Next, we have the following corollary regarding  person-by-person optimality.

\begin{corollary}(Person-by-person optimality)
\label{corollarypbpd}
Consider Problem~\ref{problem2} under the conditions of Theorem~\ref{theorem7.1}. Then the necessary and sufficient condition of Theorem~\ref{theorem7.1} hold with variational inequality (\ref{eqd16}) replaced by
\begin{align}     {\mathbb  E} \Big\{ \int_0^T  \la  {\cal H}_{u^i} (t,x^o(t),\psi^o(t), Q^{o}(t), u_t^{o}),u_t^i-u_t^{i,o}\ra dt \Big\}\geq 0, \hst \forall u^i \in {\mathbb U}_{reg}^{i}[0,T], \hso \forall i \in {\mathbb Z}_N. \label{eqd16pp}
\end{align}
\end{corollary}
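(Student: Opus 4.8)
The plan is to reduce the person-by-person problem to $N$ decoupled single-decision-maker optimal control problems and to apply to each of them the same variational machinery that underlies Theorem~\ref{theorem7.1}. By Problem~\ref{problem2}, $u^o$ is person-by-person optimal precisely when, for each $i \in {\mathbb Z}_N$, the strategy $u^{i,o}$ minimizes $v \mapsto \tilde{J}(v, u^{-i,o})$ over the convex set ${\mathbb U}_{reg}^i[0,T]$, with all other players frozen at $u^{-i,o}$. Thus I would fix an index $i$, regard $u^{-i}=u^{-i,o}$ as part of the (now fixed) coefficients of the state equation (\ref{eq1}), and treat $u^i$ as the sole control in an otherwise standard stochastic control problem on the full state $x$.

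First I would perform the convex (Gateaux) variation $u^i_\eps = u^{i,o} + \eps(v^i - u^{i,o})$, $\eps \in [0,1]$, $v^i \in {\mathbb U}_{reg}^i[0,T]$, and linearize the state equation to obtain the variational process $\delta x$; note that although only $u^i$ is perturbed, the coupling in (\ref{eq1}) makes $\delta x$ a full ${\mathbb R}^n$-valued process. Next I would introduce the adjoint pair $(\psi^o, Q^o)$ solving (\ref{adj1a})--(\ref{eq18}) along $(x^o, u^o)$, and apply the It\^o duality computation exactly as in the proof of Theorem~\ref{theorem7.1}. The \emph{crucial} observation I would make explicit is that the adjoint equation depends on the optimal pair only through $f_x,\sigma_x,\ell_x$ evaluated at $(x^o,u^o)$, so the adjoint process is identical to the one appearing in the team problem; freezing $u^{-i}$ changes nothing in (\ref{adj1a})--(\ref{eq18}). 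The duality then cancels every term containing $\delta x$, and because only the $i$-th control is varied, the explicit control dependence of the Hamiltonian (\ref{dh1}) contributes a single term, giving
$$
\frac{d}{d\eps}\tilde{J}(u^i_\eps, u^{-i,o})\Big|_{\eps=0^+} = {\mathbb E}\Big\{\int_0^T \langle {\cal H}_{u^i}(t,x^o(t),\psi^o(t),Q^o(t),u^o_t),\, v^i_t - u^{i,o}_t\rangle\, dt\Big\}.
$$
Non-negativity of this one-sided derivative at a minimizer over the convex set ${\mathbb U}_{reg}^i[0,T]$ is exactly the necessary inequality (\ref{eqd16pp}) for that $i$; ranging over all $i \in {\mathbb Z}_N$ completes the necessity direction.

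For sufficiency I would invoke the convexity hypotheses (B1), (B2): with ${\cal H}(t,\cdot,\psi,Q,\cdot)$ convex in $(x,u)$ and $\varphi$ convex, each frozen subproblem $v \mapsto \tilde{J}(v, u^{-i,o})$ is convex in $v$, so the first-order condition (\ref{eqd16pp}) is also sufficient for $u^{i,o}$ to minimize it; this is simply the single-coordinate specialization of the sufficiency argument already carried out for the team case. The only real obstacle I anticipate is the careful verification that the adjoint process is genuinely unchanged when passing from the team formulation to the $i$-th frozen subproblem, and that the duality correctly absorbs the full coupled variational process $\delta x$ into the single Hamiltonian-derivative term; once this is established the result follows by repeating, coordinate by coordinate, the variational computation of Theorem~\ref{theorem7.1}. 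In fact, comparing with the team inequality (\ref{eqd16}), the person-by-person conditions (\ref{eqd16pp}) are exactly what one obtains by restricting the admissible joint variations to those perturbing a single coordinate $u^i$ while holding $u^{-i}=u^{-i,o}$, which collapses the sum over $i$ in (\ref{eqd16}) to its $i$-th term.
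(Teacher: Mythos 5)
Your argument is correct and is precisely the route the paper intends: the paper itself only cites Part I for this corollary, and the derivation there is exactly your single-coordinate Gateaux variation with the unchanged adjoint pair $(\psi^o,Q^o)$, which collapses the sum in (\ref{eqd16}) to its $i$-th term for each frozen subproblem. One small caution: under (B1)--(B2) the frozen functional $v\mapsto\tilde{J}(v,u^{-i,o})$ need not itself be convex in $v$ (the state depends nonlinearly on the control in general), so sufficiency should be justified, as you also indicate, by the single-coordinate specialization of the Hamiltonian-convexity-plus-duality inequality rather than by convexity of $\tilde{J}$.
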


\begin{proof}   See \cite{charalambous-ahmedFIS_Parti2012}.
\end{proof}

\noi It can be shown by contradiction that the team and person-by-person  optimality conditions presented above are equivalent 
 (see \cite{charalambous-ahmedFIS_Parti2012}).\\

\noi Often in the application of the minimum principle we need to identify the the martingale term in the adjoint process equation. One approach how to determine $Q$ is discussed in the next remark.

\begin{remark}
\label{mart}
Utilizing the Riesz representation theorem for Hilbert space martingles,  in \cite{charalambous-ahmedFIS_Parti2012} the adjoint process $Q(\cdot)$  in the adjoint equation (\ref{adj1a}), is identified as $ Q(t) \equiv \psi_x(t) \sigma(t,x(t),u_t)$, provided $\psi_x$ exists (i.e., $f, \sigma, \ell, \varphi$ are twice continuously differentiable and $f_{xx}, \sigma_{xx}, \ell_{xx}, \varphi_{xx}$  are uniformly bounded).
\end{remark}

\noi Note that from the team optimality conditions presented above we also deduce the optimality conditions for centralized full and partial information strategies. This observation is stated in the next remark (for partial information strategies)

\begin{remark}
\label{remark5.2} 
 Consider Problem~\ref{problem1}  under the conditions of Theorem~\ref{theorem7.1}, Part {\bf (I)} and Part {\bf (II)} and assume $u^i$ are adapted  the centralized partial information ${\cal G}_T\subset {\mathbb F}_T$, and centralized partial information  ${\cal G}_T^{z^u} \subset {\cal F}_{0,T}^{x^u}$, respectively. Then the necessary conditions for ${\cal G}_T-$adapted $u^i$'s are given by the following point wise almost sure inequalities 

\begin{eqnarray} 
 {\mathbb E} \Big\{  {\cal H}(t,x^o(t),\psi^o(t),Q^o(t),u )|{\cal G}_{0,t}\Big\} \geq   {\mathbb E}\Big\{  {\cal H}(t,x^o(t),\psi(t),Q(t),u_t^o)|{\cal G}_{0,t}\Big\}, \nonumber \\
 \forall u \in {\mathbb A}^{(N)}, a.e. t \in [0,T], {\mathbb P}|_{{\cal G}_{0,t}}-a.s.,  \label{eq35} 
 \end{eqnarray} 
 where $\{x^o(t), \psi^o(t), Q^o(t): t \in [0,T]\}$ are the solutions of the Hamiltonian system (\ref{st1a}), (\ref{st1i}), (\ref{adj1a}), (\ref{eq18}), while for  ${\cal G}_T^{z^u}-$adapted $u^i$'s  the necessary condition is (\ref{eq35}) with the conditioning done with respect to ${\cal G}_T^{z^u}$.  This corresponds to the partial information investigated in \cite{ahmed-charalambous2012a}.

\end{remark}

\section{Optimal Team Strategies for Classes of Games}
\label{classes}
We are now ready to derive  explicit optimal team strategies for general classes of team games,  when the  dynamics and the reward have certain structures. These  include nonlinear as well as linear distributed systems. Our focus is on optimal decentralized strategies which are given in a) closed form involving conditional expectations based on the information structures available to the DM's and b) closed expressions similar to the classical Linear-Quadratic Theory. \\
First, we define the main classes of team games we shall investigate.

\begin{definition}(Team games with special structures)
\label{normal}
We define the following forms of team games.

\noi{\bf (GNF): Generalized Normal Form.} The team game is said to have "generalized normal form" if 
\begin{align}
f(t,x,u) \tri &b(t,x)+ g(t,x)u, \hst g(t,x)u \tri \sum_{j=1}^N g^{(j)}(t,x) u^j,  \label{gn1} \\
 \sigma(t,x,u)  \tri & \left[ \begin{array}{cccc}  \kappa^{(1)}(t,x) & \kappa^{(2)}(t,x)  \ldots & \kappa^{(m)}(t,x)  \end{array} \right]  \nonumber \\
 &+ 
  \left[ \begin{array}{cccc}  s_{1}(t,x)u & s_{2}(t,x)u & \ldots & s_{m}(t,x)u\end{array} \right], \label{gn2} \\
 \ell(t,x,u) \tri &\frac{1}{2}\la u,R(t,x)u\ra + \frac{1}{2} \lambda(t,x)+ \la u, \eta(t,x)\ra , \label{gn3}  \\
\mbox{where} \hso \la u, R(t,x) u \ra \tri & \sum_{i=1}^N \sum_{j=1}^N u^{i,*} R_{ij}(t,x)u^j, \hst \la u,\eta(t,x)\ra \tri  \sum_{i=1}^N u^{i,*}\eta^i(t,x), \nonumber 
\end{align}
and   $\kappa^{(i)}(\cdot,\cdot)$ is the $i$th column of an $n\times m$ matrix $\kappa(\cdot,\cdot)$, for $i=1,2, \ldots, m$, $s_i(\cdot,\cdot)$ is an $n\times d$ matrix,  for $i=1,2, \ldots, m$, $R(\cdot,\cdot)$ is symmetric uniformly positive definite, and $\lambda(\cdot,\cdot)$ is uniformly positive semidefinite.\\
{\bf GNF} refers to the case when the drift and diffusion coefficients $f,\sigma$ are linear in the decision variable $u$, and the pay-off  function $\ell$ is quadratic in $u$, while $f,\sigma, \ell, \varphi$ are nonlinear in $x$.\\ 

\noi{\bf (SGNF): Simplified Generalized Normal Form.} A team game is  said  to have "simplified generalized normal form" if it is of generalized normal form and  $\sigma(t,x,u) $ is independent of $u$, that is, $s_j=0, 1 \leq j \leq m$ in (\ref{gn2}).   \\
{\bf SGNF} refers to the case when $f$ is linear in $u$, $\sigma$ is independent of $u$, $\ell $ is quadratic in $u$, and $f, \sigma, \ell$ are nonlinear in $x$.\\

\noi{\bf (NF): Normal Form.} A team game is  said  to have "normal form" if     
\begin{align}
f(t,x,u)=&A(t)x+b(t) +  B(t)u,  \label{n1} \\
\sigma(t,x,u)  \tri & \left[ \begin{array}{cccc}  \kappa_1(t) x & \kappa_2(t)x & \ldots & \kappa_m(t)x  \end{array} \right] \nonumber \\
& + 
  \left[ \begin{array}{cccc}  s_{1}(t)u & s_{2}(t)u & \ldots & s_{m}(t)u\end{array} \right]+G(t), \label{n1a}\\
 \ell(t,x)=&\frac{1}{2}\la u,R(t)u\ra  + \frac{1}{2} \la x,H(t)x\ra +\la x,F(t)\ra + \la u, E(t)x\ra  +\la u,m(t)\ra ,  \label{n3}\\
  \varphi(x) =& \frac{1}{2} \la x, M(T)x \ra + \la x, N(T)\ra ,  \label{n3a}
\end{align}
and   $\kappa_i(\cdot) \in {\cal L}({\mathbb R}^n; {\mathbb R}^n)$ for $i=1,\ldots, m$,   $s_i(\cdot) \in {\cal L}({\mathbb R}^d; {\mathbb R}^n)$   for $i=1, \ldots, m$, and $R(\cdot)$ is symmetric uniformly positive definite, $H(\cdot)$ is symmetric uniformly positive semidefinite, and $M(T)$ is symmetric positive  semidefinite.\\
{\bf NF} refers to the case when $f, \sigma$ are linear is $x, u$, and $\ell, \varphi$ are quadratic in $x, u$. Therefore, the dynamics also include stochastic integral terms which are linear is $x, u$.\\

\noi{\bf (LQF): Linear-Quadratic Form.} A team game is  said  to have "normal form" if     
\begin{align}
f(t,x,u)=&A(t)x+  B(t)u,  \hst \sigma(t,x,u)= G(t), \label{lq1} \\
 \ell(t,x)=&\frac{1}{2}\la u,R(t)u\ra  + \frac{1}{2} \la x,H(t)x\ra, \hst
 \varphi(x) =\frac{1}{2}\la x, M(T) x\ra , \label{lg3}
\end{align}
and $R(\cdot)$ is symmetric uniformly positive definite, $H(\cdot)$ is symmetric uniformly positive semidefinite, and $M(T)$ is symmetric positive semidefinite.\\
{\bf NF} refers to the case when $f$ is linear in $x, u$, $\sigma$ is independent of $x, u$, and $\ell, \varphi$ are quadratic in $x,u$; this is the classical linear-quadratic (dynamics, pay-off) model often utilized in centralized decision making.  

\end{definition}

\noi Below we compute the optimal strategies for the different cases of Definition~\ref{normal}. Although, we utilized nonanticipative strategies ${\mathbb U}_{reg}^{(N)}[0,T]$, these computations can be done for feedback strategies ${\mathbb U}_{reg}^{(N),z^u}[0,T]$.\\

\noi {\bf  Case {\bf GNF}}. \\
Utilizing the definition of Hamiltonian (\ref{dh1}), its derivative is given by
\bea
 {\cal H}_u(t,x,\psi,Q,u)  = g^*(t,x) \psi  +  \sum_{i=1}^m  s_i^*(t,x) Q^{(i)}(t) +  R(t,x)u + \eta(t,x), \hso   (t,x)\in [0,T]\times {\mathbb R}^n. \label{dh3}
 \eea
  Since the diffusion coefficient $\sigma$ depends on  $u$, then  $Q^o(\cdot)$  also depends on the control and by Remark~\ref{mart},  $Q^o$ is given by 
\begin{align}
Q^o(t) =&\psi_x^o(t) \sigma(t,x^o(t),u_t^o), \label{mt1a}  \\
 Q^{(i),o}(t) =& \psi_x^o(t)\Big\{  \kappa^{(i)}(t,x^o) + s_i(t,x^o)u_t^o\Big\}, \hst t \in [0,T], \hso   i=1,2, \ldots, m. \label{mt1}
\end{align}
 Define the quantities 
\begin{align}
\Lambda(t,x, \psi_x) \tri & \sum_{i=1}^m s_i^*(t,x) \psi_x(t) \kappa^{(i)}(t,x) \equiv \left[ \begin{array}{c} \Lambda^1 \\ \Lambda^2 \\ \ldots \\ \Lambda^N \end{array} \right](t,x,\psi_x), \hst \nonumber \\
M(t,x,\psi_x) \tri & \sum_{i=1}^m s_i^*(t,x) \psi_x(t) s_i(t,x) \equiv \left[ \begin{array}{cccc} M_{11} & M_{12} & \ldots & M_{1N} \\
M_{21} & M_{22} & \ldots& M_{2N} \\
\ldots & \ldots & \ldots &\ldots \\ 
M_{N1} & M_{N2} & \ldots & M_{NN} \end{array} \right](t,x,\psi_x), \nonumber 
\end{align}
 where  $\Lambda^i \in {\cal L}({\mathbb R}^{m}, {\mathbb R}^{d_i})$,  $M_{ij} \in {\cal L}({\mathbb R}^{d_j}, {\mathbb R}^{d_i})$,  $i,j=1,2, \ldots, N$. \\
  By Theorem~\ref{theorem7.1},  substituting    $Q^o(\cdot)$ given by  (\ref{mt1}) into (\ref{dh3}), and utilizing the fact that $u_t^{i,o}$ is ${\cal G}_{0,t}^i-$adapted for each $i \in {\mathbb Z}_N$, the explicit expression for $u_t^{i,o}$ is obtained from (\ref{eqhd35}), and it is given by
 \begin{align}
u_t^{i,o}=&- \Big\{  {\mathbb E} \Big( (R_{ii} +M_{ii})(t,x^o(t),\psi_x^o(t))  | {\cal G}_{0,t}^i\Big)\Big\}^{-1} \Big\{ {\mathbb E} \Big( \eta^i(t,x^o(t)) + \Lambda^i(t,x^o(t), \psi_x^o(t)) | {\cal G}_{0,t}^i\Big)  \nonumber \\
& + \sum_{j=1,j \neq i}^N  {\mathbb E} \Big( (R_{ij}+M_{ij})(t,x^o(t),\psi_x^o(t))u_t^{j,o}     | {\cal G}_{0,t}^i \Big)    \nonumber     \\
  & +    {\mathbb  E} \Big(  g^{(i),*}(t,x)\psi^{o}(t) |{\cal G}_{0,t}^i\Big) \Big\}, \hst  {\mathbb P}|_{ {\cal G}_{0,t}^i}-a.s., \hso i=1,2, \ldots, N .\label{grn4}
\end{align}
Next, we make some observations. \\

\begin{description}
\item[(O1):]
At any $t \in [0,T]$, $u_t^{i,o}$ is a functional of estimates of all other optimal decisions $u_t^{j,o}, j\neq i$ given its own information. Such strategies impose a  heavy computational burden on any  decentralized decision maker. Therefore, a question which might be of interest to address, is "what information needs to be signal among the DM's  to reduce computations?" The answer to this question will become apparent when we proceed to compute the explicit expressions of the optimal strategies.

\item[(O2):]
In the simplified case of diagonal $M, R$,  the right side of (\ref{grn4}) does not depend directly on estimates of the other DM's, but  this dependence is hidden in the adjoint process $\psi^o(\cdot)$. In fact, since no communication exchange is allowed between the DM's, then  any communication between them is made via the interaction of the DM's with the state and adjoint processes $x^o(\cdot), \psi^o(\cdot)$. One may view the stochastic differential system together with the adjoint backward stochastic differential equation as playing the role of a channel that makes communication between the DM's possible. Therefore, an interesting question is "can  we quantify the amount of information communicated among the DM's via the Hamiltonian system of equations and if so, can we utilize this insight to reduce the computational burden, by allowing limited signaling between the DM's?" We shall return to this question and identify the variable which are involved in  such communication between the DMs.
\end{description}

Finally, note that the optimal strategies can be  further simplified by assuming $g(t,x)$ is linear in $x$ and $\sigma(t,\cdot, u)$ is linear in $x$, $R(\cdot, \cdot)$ is independent of $x$, $\lambda(\cdot,\cdot)$ is quadratic in $x$, and $\eta(\cdot,\cdot)$ is linear in $x$. \\

\noi {\bf  Case SNF.} \\
\noi For a team game of simplified  generalized  form, the diffusion coefficient $\sigma$ is independent of $u$,  therefore the second right hand side term of (\ref{dh3}) is zero (since $s_i=0, i=1,2, \ldots, m$), and the derivative of the Hamiltonian is linear in $u$. Therefore, the explicit expressions  for $u^{i,o}$ are obtained  from (\ref{grn4}) by setting $M_{ij}=0, \Lambda^i=0, i,j=1, \ldots, N$, hence 
 \begin{align}
u_t^{i,o}=&- \Big\{  {\mathbb E} \Big( R_{ii}(t,x^o(t))  | {\cal G}_{0,t}^i\Big)\Big\}^{-1} \Big\{ {\mathbb E} \Big( \eta^i(t,x^o(t))  | {\cal G}_{0,t}^i\Big)  
 + \sum_{j=1,j \neq i}^N  {\mathbb E} \Big( R_{ij}(t,x^o(t))u_t^{j,o}     | {\cal G}_{0,t}^i \Big)    \nonumber     \\
  & -     {\mathbb  E} \Big(  g^{(i),*}(t,x)\psi^{o}(t) |{\cal G}_{0,t}^i\Big) \Big\}, \hst  {\mathbb P}|_{ {\cal G}_{0,t}^i}-a.s., \hso i=1,2, \ldots, N .\label{sgrn4}
\end{align}
By comparing the optimal strategies for  {\bf GNF} given by (\ref{grn4}) and (\ref{sgrn4}) we have the following observation.

\begin{description}
\item[(O3):]
 When $\sigma$ is independent of $u$,  then the adjoint process $Q^o(\cdot)$ is independent of $u^o$, and therefore the optimal strategies do not involve derivatives of the adjoint process $\psi_x(\cdot)$ as in (\ref{grn4}). 
 
 \item[(O4):] The team game formulation also includes as a special case, distributed estimation as follows. Suppose each component of the vector $x \tri (x^1, \ldots x^N)$ denotes the channel output at  different  distributed receivers carrying an information message (RV) $\theta \tri (\theta^1, \ldots \theta^N), \theta^i: \Omega \longrightarrow {\mathbb R}^{n_i}, i=1, \ldots, N$, and each channel is  subject to feedback and interference from the other channels. Then each channel outputs can be described by   
\begin{align}
dx^i(t)=& b^i(t,x^i(t),\theta^i)dt +\kappa^i(t,x^i(t))dW^i + \sum_{j=1, j \neq i}^N b^{ij}(t,x^j(t),\theta^j)dt \nonumber \\
& +\sum_{j=1, j \neq i}^N \kappa^{ij}(t,x^j(t))dW^j, \hst x^i(0)=x_0^i, \hso i=1, \ldots N .\label{af2}
\end{align}
 Thus,  $\{x^i(t): 0\leq t \leq T\}$ describes  the channel output of the $i$th receiver which is  subject to feedback and interference from the other channels,  $\theta^i$ is the message to be estimated at the $ith$ receiver, and $u_t^i(\{x^i(s): 0\leq s \leq t\})$ is its team optimal estimator at time $t \in [0,T]$,  based on  having access to $x^i$. Then the optimal distributed team estimators are obtained from  
     (\ref{sgrn4}), and they are given  by the following equation. 
  \begin{align}
u_t^{i,o}=- \Big\{  {\mathbb E} \Big( R_{ii}(t,x(t))  | {\cal G}_{0,t}^{x^i}\Big)\Big\}^{-1} \Big\{ {\mathbb E}& \Big( \eta^i(t,x(t))  | {\cal G}_{0,t}^{x^i}\Big)  
 + \sum_{j=1,j \neq i}^N  {\mathbb E} \Big( R_{ij}(t,x(t))u_t^{j,o}     | {\cal G}_{0,t}^{x^i} \Big)  \Big\},  \nonumber     \\
  & \hst  {\mathbb P}|_{ {\cal G}_{0,t}^{x^i}}-a.s., \hso i=1,2, \ldots, N .\label{af1}
\end{align}   
One may consider several other scenarios of distributed estimation by considering specific pay-off function $\ell(t,x,u)$ which represents estimation error.

\end{description}

\noi {\bf Case NF.} \\
For a team game of normal form define the quantities 
\begin{align}
\overline{\Lambda}(t,x,\psi_x) \tri & \sum_{i=1}^m s_i^*(t) \psi_x(t) \Big(\kappa_i(t)x+G^{(i)}(t)\Big) \equiv \left[ \begin{array}{c}  \overline{\Lambda^1}(t,x,\psi_x)  \\ \overline{\Lambda^2}(t,x,\psi_x)   \\ \ldots \\ \overline{\Lambda^N}(t,x,\psi_x) \end{array} \right], \hst \overline{\Lambda^i} \in {\cal L}({\mathbb R}^{n}, {\mathbb R}^{d_i}), i=1, \ldots, N, \nonumber \\
\overline{M}(t,\psi_x) \tri & \sum_{i=1}^m s_i^*(t) \psi_x(t) s_i(t) \equiv \left[ \begin{array}{cccc} \overline{M}_{11} & \overline{M}_{12} & \ldots & \overline{M}_{1N} \\
\overline{M}_{21} & \overline{M}_{22} & \ldots& \overline{M}_{2N} \\
\ldots & \ldots & \ldots &\ldots \\ 
\overline{M}_{N1} & \overline{M}_{N2} & \ldots & \overline{M}_{NN} \end{array} \right](t,\psi_x),  \nonumber 
\end{align}
where $\overline{M}_{ij} \in {\cal L}({\mathbb R}^{d_j}, {\mathbb R}^{d_i}), \: i,j=1,2, \ldots, N.$
Then from the optimal strategies under {\bf GNF} one obtains
 \begin{align}
u_t^{i,o}=&- \Big\{  R_{ii}(t) + {\mathbb E} \Big[ \overline{M}_{ii}(t,\psi_x^o(t))  | {\cal G}_{0,t}^i\Big]\Big\}^{-1} \Big\{ m^i(t) +{\mathbb E} \Big[   \sum_{j=1}^n E_{ij}(t)x^{j,o}(t) +  \overline{\Lambda^i}(t,x^o(t)\psi_x) | {\cal G}_{0,t}^i\Big]  \nonumber \\
& + \sum_{j=1,j \neq i}^N  \Big(  (R_{ij}+  {\mathbb E} \Big[ \overline{M}_{ij}(t,\psi_x(t)) u_t^{j,o}     | {\cal G}_{0,t}^i \Big]    \nonumber     \\
  & + B^{(i),*}(t)   {\mathbb  E} \Big[  \psi^{o}(t) |{\cal G}_{0,t}^i\Big] \Big\}, \hst  {\mathbb P}|_{ {\cal G}_{0,t}^i}-a.s., \hso i=1,2, \ldots, N .\label{nr4}
\end{align}
Another important observations is the following.

\begin{description}
\item[(O5):] The expressions of the optimal team strategies can be written in a fixed point form. This is described next for the case {\bf LQF}.
\end{description}

\noi {\bf Case LQF with $E \neq 0, m \neq 0$.} \\
For a  team game of linear-quadratic form (with $E, m$ non-zero) then from the previous optimal strategies one obtains
\begin{align}
u_t^{i,o}=&-  R_{ii}^{-1}(t) \Big\{  \sum_{j=1}^n E_{ij}(t) {\mathbb E}\Big(x^{j,o}(t)| {\cal G}_{0,t}^{i} \Big) +m^i(t)+ \sum_{j=1,j \neq i}^N  R_{ij}(t)  {\mathbb E} \Big( u_t^{j,o} | {\cal G}_{0,t}^i \Big)    \nonumber     \\
  & +   B^{(i),*}(t)      {\mathbb  E} \Big( \psi^{o}(t) |{\cal G}_{0,t}^i\Big) \Big\}, \hst  {\mathbb P}|_{ {\cal G}_{0,t}^i}-a.s., \hso i=1,2, \ldots, N .\label{n4r}
\end{align}
Note that (\ref{n4r}) can be put in the form of fixed point  matrix equation with random coefficients as follows. Define 
\begin{align}
\widehat{u^{i,j,o}}(t) \tri & {\mathbb E}\Big( u_t^{i,o} | {\cal G}_{0,t}^j\Big), \hso \widehat{x^{i,j,o}}(t) \tri {\mathbb E}\Big( x^{i,o}(t) | {\cal G}_{0,t}^j\Big), \nonumber \\
\widehat{u^{i,o}}(t) \tri & Vector\{ \widehat{u^{1,i,o}}(t), \ldots,  \widehat{u^{N,i,o}}(t)\},  \hso  \widehat{x^{i,o}}(t) \tri  Vector\{ \widehat{x^{1,i,o}}(t), \ldots,  \widehat{x^{N,i,o}}(t)\}, \hso      i,j=1 \ldots, N, \nonumber \\
\widehat{{ u}^o}(t) \tri &  Vector\{ \widehat{u^{1,o}}(t), \ldots,  \widehat{u^{N,o}}(t)\}, \hso \widehat{{ x}^o}(t) \tri  Vector\{ \widehat{x^{1,o}}(t), \ldots,  \widehat{x^{N,o}}(t)\},
\end{align}
\begin{align}
\widehat{\psi^o}(t) \tri & Vector\{ {\mathbb E}\Big(\psi^{o}(t) | {\cal G}_{0,t}^1\Big), \ldots,  {\mathbb E}\Big(\psi^{o}(t) | {\cal G}_{0,t}^N\Big) \} , \nonumber \\
 R^{[i]}(t)=& \left[ \begin{array}{c} R_{i1}(t), \ldots, R_{iN}(t) \end{array} \right],   \hso E^{[i]}(t)= \left[ \begin{array}{c} E_{i1}(t), \ldots, E_{iN}(t) \end{array} \right], \hso i=1, \ldots, N.  
\end{align}
Taking expectation of both sides of (\ref{n4r}) with respect to ${\cal G}_{0,t}^i$ 
 then (\ref{n4r}) is written in terms of linear equation with random coefficients as follows.
\begin{align}
 \diag\{R^{[1]}(t), \ldots, R^{[N]}(t)\} \widehat{{ u}^o}(t) +& \diag\{E^{[1]}(t), \ldots, E^{[N]}(t)\} \widehat{{x}^o}(t) \nonumber \\
 &+ \diag\{B^{(1),*}(t), \ldots, B^{(N),*}(t) \} \widehat{{\bf \psi}^o}(t)+ m(t) =0. \label{fp1}
 \end{align}
Clearly,  (\ref{fp1}) can be solved via  fixed point methods, provided we determine the estimates $\widehat{x^o}(t), \widehat{\psi^o}(t)$. In the next subsection we determine the estimate $\widehat{x^o}(t)$,  and also show that $\widehat{\psi^o}(t)$ can be expressed in terms of the estimates  $\widehat{x^o}(t), \widehat{u^o}(t)$.  \\

\noi We conclude this section by observing that the optimal team strategies involve conditional expectations with respect to the DMs information structures. These conditional expectations can be simplified considerably by allowing signaling between the different DMs.

\subsection{Team Games of Normal Form: Explicit Expressions of Adjoint Processes}
\label{nfg}
In this section we concentrate on Normal Form (and Linear-Quadratic Form) games, and we derive explicit expressions for the adjoint processes of $\psi^o(\cdot),Q^o(\cdot)$ as a functional of $x^o(\cdot), u^o(\cdot)$. Note that this is a necessary step before one proceeds with the computation of the explicit form of the optimal decentralized strategies, or the computation of them via fixed point methods as in (\ref{fp1}). \\
For  a game of Normal Form the  Hamiltonian system of equations are the following. 
\begin{align}
{\cal H}(t,x,\psi, Q, u) =& \la A(t)x+b(t) +Bu, \psi\ra  +tr \Big(Q^* \sigma(t,x,u)\Big) \nonumber \\
&+ \frac{1}{2} \la x, H(t)x\ra  + \frac{1}{2} \la u, R(t) u\ra + \la x, F(t) \ra + \la u, E(t)x\ra + \la u, m(t)\ra, \label{ex3} 
\end{align}
where $\sigma$ is given by (\ref{n1a}). 
The derivative of the Hamiltonian with respect to $u$ is 
\bea
{\cal H}_u(t,x,\psi, Q, u) = B^*(t) \psi+  R(t)u+ E(t)x + m(t) + \sum_{i=1}^m s_i^*(t) Q^{(i)}(t) . \label{ex3a}
\eea

\noi Let $(x^o(\cdot), \psi^o(\cdot), Q^o(\cdot))$ denote the solutions of the Hamiltonian system, corresponding to the optimal control $u^o$, then 
\begin{align}
 dx^o(t)=& A(t)x^o(t)dt+ b(t)dt  + B(t)u_t^o dt + \sum_{i=1}^m \Big(\kappa_i(t) x \nonumber \\
&+ s_i(t) u_t^o\Big)dW_i(t)   +   G(t)dW(t), \hst x^o(0)=x_0, \label{ex5n} \\ \nonumber \\
  d\psi^o(t)=& -A^*(t)\psi^o(t)dt   - H(t) x^o(t) dt -F(t)dt -E^*(t) u_t^odt \nonumber \\
  & -V_{Q^o}(t) dt + Q^o(t) dW(t), \hst \psi^o(T)=M(T) x^o(T)+ N(T), \label{ex6n}\\
V_{Q^o}(t) =& \sum_{i=1}^m \kappa_i^*(t) Q^{(i),o}(t), \label{vq}
\end{align}
Next, we find the form of the solution of the adjoint equation (\ref{ex6n}) (and also identify the martingale term in (\ref{ex6n}) via an alternative method to Remark~\ref{mart}). Let $\{\Phi(t,s): 0\leq s \leq t \leq T\}$ denote the transition operator of $A(\cdot)$ and $\Phi^*(\cdot, \cdot)$ that of the adjoint $A^*(\cdot)$ of $A(\cdot)$. Then we have  the identity $\frac{\partial}{\partial s} \Phi^*(t,s) = -A^*(s) \Phi^*(t,s), 0 \leq s \leq t \leq T$. One can verify by differentiation that the solution $\{ \psi^o(t): t \in [0,T]\}$ of (\ref{ex6n}),  is given by  
\begin{align}
\psi^o(t)=& \Phi^*(T,t)M(T) x^o(T) + N(T)+ \int_{t}^T \Phi^*(s,t) \Big\{ H(s) x^o(s) ds +F(s)ds + E^*(s) u_s^o  ds \nonumber \\
&+ V_{Q^o}(s)ds - Q^o(s) dW(s) \Big\}. \label{ex9nn}
\end{align}  
 Since for any control policy, $\{x^o(s): 0\leq t \leq s \leq T\}$ is uniquely determined from (\ref{ex5n}) and its current value $x^o(t)$, then (\ref{ex9nn}) can be expressed via 
\bea
\psi^o(t)=\Sigma(t) x^o(t)+ \beta^o(t) , \hst t \in [0,T], \label{ex15g}
\eea
where $\Sigma(\cdot), \beta^o(\cdot)$ determine the operators to the one expressed via (\ref{ex9nn}). \\
Next, we determine the operators $(\Sigma(\cdot), \beta^o(\cdot))$. Differentiating both sides of (\ref{ex15g}) and using (\ref{ex5n}), (\ref{ex6n}) yields
\begin{align}
-A^*(t)\psi^o(t)dt   &- H(t) x^o(t) dt -F(t)dt-E^*(t)u_t^o dt  -V_{Q^o}(t) dt + Q^o(t) dW(t) \nonumber \\
 =& \dot{\Sigma}(t)x^o(t) dt +  \Sigma(t) \Big\{ A(t)x^o(t)dt +b(t)dt+ B(t)u_t^o dt \nonumber \\
  +&\sum_{i=1}^m \Big( \kappa_i(t) x^o(t)+ s_i(t) u_t^o\Big) dW_i(t)   + G(t)dW(t)\Big\} + d\beta^o(t). \label{ex16g}
\end{align}
 By matching the intensity of the martingale terms $\{\cdot\} dW(t)$ in (\ref{ex16g}) we obtain
 \begin{align}
 Q^{i,o} (t)=& \psi_x^o(t) \sigma^{(i)}(t,x^o(t),u_t^o)= \Sigma(t)\Big(\kappa_i(t) x^o(t)+ s_i(t) u_t^o +  G^{(i)}(t)\Big), \: t \in [0,T], \: i=1, \ldots, m, \label{ex16gb}
 \end{align} 
 and by (\ref{vq}) we also obtain
 \begin{align}
 V_{Q^o}(t)= \sum_{i=1}^m \kappa_i^*(t) \Sigma(t) \Big(\kappa_i(t) x^o(t)+ s_i(t) u_t^o +  G^{(i)}(t)\Big), \hst t \in [0,T]. \label{ex16gbb} 
  \end{align}
Clearly, $Q^o$ given by (\ref{ex16gb}) is precisely the one predicted by Remark~\ref{mart}. \\
Substituting the claimed relation (\ref{ex15g}) into (\ref{ex16g}) we obtained the identity 
\begin{align}
\Big\{-A^*(t) \Sigma(t)   &-\Sigma(t) A(t) - H(t) -\sum_{i=1}^m \kappa_i^*(t) \Sigma(t) \kappa_i(t) -\dot{\Sigma}(t) \Big\} x^o(t) dt  +\sum_{i=1}^m Q^{(i),o}(t) dW_i(t) \nonumber \\
 =&  A^*(t) \beta^o(t)dt + \Sigma(t) b(t)+ \Sigma(t)  B(t)u_t^o dt + \sum_{i=1}^m \kappa_i^*(t) \Sigma(t) \Big(s_i(t) u_t^o + G^{(i)}(t)\Big)dt \nonumber \\
 &+ \Sigma(t) \sum_{i=1}^m \Big( \kappa_i(t) x^o(t) + s_i(t) u_t^o + G^{(i)}(t)\Big)dW_i(t) + F(t)dt +E^*(t)u_t^o + d\beta^o(t). \label{ex16ga}
\end{align}
Therefore, from (\ref{ex16ga}), (\ref{ex16gb}) we deduce
 \begin{align}
\dot{\Sigma}(t) +&A^*(t) \Sigma(t)   + \Sigma(t) A(t) + \sum_{i=1}^m \kappa_i^*(t)\Sigma(t) \kappa_i(t) + H(t)=0,  \hst \Sigma(T)=M(T),  \label{exg17} \\
\dot{ \beta}^o(t) &+A^*(t) \beta^o(t)+ \Sigma(t) b(t) +F(t) +\Sigma(t)  B(t)u_t^o dt+ E^*(t) u_t^o\nonumber \\
& + \sum_{i=1}^m \kappa_i^*(t) \Sigma(t) \Big(s_i(t) u_t^o + G^{(i)}(t)\Big) =0, \hst \beta^o(T)=N(T). \label{ex18g}
\end{align}
 The closed form expressions of the adjoint processes $(\psi^o(\cdot), Q^o(\cdot))$ of this section are required in order to explicitly compute the closed form expression of the optimal decentralized strategies or apply  fixed point methods via (\ref{fp1}) (in addition to solving centralized problems).\\ 
 Next we find  the optimal strategy assuming centralized information structure for each DM, and  then we determine the optimal strategies assuming  decentralized  information structures for each DM.  The reason we pursue centralized strategies is to gain additional insight into its differences when compared to decentralized strategies, both in the procedure and the amount of complexity involved in implementing centralized versus decentralized strategies.

\subsection{Centralized Information Structure: NF and LQF}
\label{FIP}
First, we consider a centralized information structure and we compute the optimal strategy for team games of Normal and Linear-Quadratic forms. For any $t \in [0,T]$ the  information structure  ${\cal G}_{0,t}^{x^u} \tri {\cal G}_{0,t}^{x^{1,u}} \bigvee  {\cal G}_{0,t}^{x^{2,u}} \ldots \bigvee  {\cal G}_{0,t}^{x^{N,u}} $    is available to all DMs and it is the $\sigma-$algebra ${\cal G}_{0,t}^{x^u} \tri \sigma\{(x^1(s), x^2(s), \ldots, x^N(s)): 0 \leq s \leq t\}$ (we assume a strong formulation so the information depends on $u$). If instead, we consider nonanticipative centralized information structure ${\cal G}_{0,t}^{x(0), W} \tri {\cal G}_{0,t}^{x^1(0), W^1} \bigvee {\cal G}_{0,t}^{x^2(0), W^2} \ldots \bigvee {\cal G}_{0,t}^{x^N(0), W^N}$ then the final results are the same.
 This is a common (centralized) full  information structure decision strategy hence, the optimal decision $\{u_t^o: 0\leq t \leq T\}$ is found via 
\bea
{\mathbb E} \Big\{ {\cal H}_u (t,x^o(t),\psi^o(t),Q^o(t),u_t^o) |{\cal G}_{0,t}^{x^o}\Big\} =0, \hst a.e.t \in [0,T], \hso {\mathbb P}|_{{\cal G}_{0,t}^{x^o}}-a.s. \label{ex4f}
\eea
where $(x^o(\cdot), \psi^o(\cdot), Q^o(\cdot))$ are solutions of the Hamiltonian system (\ref{ex5n}), (\ref{ex6n}) corresponding to $u^o$. Since $(\psi^o(\cdot), Q^o(\cdot))$ are given by  (\ref{ex15g}) and (\ref{ex16gb}), respectively, all we need to do is to determine $u^o$ as a functional of $x^o$.\\
We show the following claims.\\

\noi {\bf LQF.} When the system dynamics and pay-off are of Linear-Quadratic Form, the optimal centralized strategy is given by
\bea
u_t^o = -R^{-1}(t) B^*(t) K(t) {x^o}(t) , \hst t \in [0,T], \label{ex16f}
\eea
where the operator $K(t) \in {\cal L}({\mathbb R}^n, {\mathbb R}^n)$ is the symmetric positive semidefinite solution of the differential equation
\begin{align}
& \dot{K}(t) + A^*(t) K(t)    + K(t) A(t)
 -K(t) B(t) R^{-1}(t) B^*(t) K(t)   +H(t)=0, \hst t \in [0,T),   \label{ex20} \\
& K(T)=M(T) . \label{ex21}
\end{align}

\noi {\bf NF with $E=0$.} When the system dynamics and pay-off are of Normal Form (with $E=0$), the optimal centralized strategy is given by
\begin{align}
u_t^o =& -\Big(R(t) + \sum_{i=1}^m s_i^*(t) K(t) s_i(t)\Big)^{-1} \Big\{ \Big( B^*(t) K(t)+ \sum_{i=1}^m s_i^*(t) K(t) \kappa_i(t) \Big)x^o(t) \nonumber \\
& +m(t) + B^*(t) r(t)\Big\}, \hst t \in [0,T], \label{ex16fn}
\end{align}
where the operator $K(t) \in {\cal L}({\mathbb R}^n, {\mathbb R}^n)$ is symmetric positive semidefinite, and $r(t) \in {\mathbb R}^n$, and they  are  solutions of the differential equations
\begin{align}
 &\dot{K}(t) + A^*(t) K(t)    + K(t) A(t)  + \sum_{i=1}^m \kappa_i^*(t) K(t) \kappa_i(t)  + H(t) \nonumber \\
&- \Big( K(t) B(t) + \sum_{i=1}^m \kappa_i^*(t) K(t) s_i(t) \Big) \Big( R(t) + \sum_{i=1}^m s_i^*(t)K(t) s_i(t)\Big)^{-1}\Big( B^*(t) K(t) \nonumber \\
&+ \sum_{i=1}^m s_i^*(t) K(t)\kappa_i(t) \Big)=0, \hst t \in [0,T),   \label{ex20nn} \\
 &K(T)=M(T), \label{ex21nn}
\end{align}
\begin{align}
 &\dot{r}(t)+ \Big\{ A^*(t) - \Big( K(t) B(t) + \sum_{i=1}^m \kappa_i^*(t) K(t) s_i(t) \Big)\Big( R(t) + \sum_{i=1}^m s_i^*(t)K(t) s_i(t)\Big)^{-1}B^*(t) \Big\} \nonumber \\
&+ F(t) + K(t) b(t) \nonumber \\
&- \Big( K(t) B(t) + \sum_{i=1}^m \kappa_i^*(t) K(t) s_i(t) \Big)\Big( R(t) + \sum_{i=1}^m s_i^*(t)K(t) s_i(t)\Big)^{-1}m(t) , \hst t \in [[0,T), \label{ex20n} \\
&r(T)= N(T). \label{ex21n}
\end{align}
Next, we  verify the claim stated under {\bf LQF} and we leave the claim stated under {\bf NF} to the reader since its derivation is similar.\\

\noi {\bf Derivation of LQF Solution.}  From (\ref{ex4f}) the optimal strategy is
\bea
u_t^o = -R^{-1}(t) B^*(t) {\mathbb E} \Big\{ \psi^o(t) |{\cal G}_{0,t}^{x^o}\Big\}, \hst t \in [0,T],  \label{ex8fa}
\eea
where  $(x^o(\cdot), \psi^o(\cdot), Q^o(\cdot))$ denote the solutions of the following Hamiltonian system, corresponding to the optimal control $u^o$
\begin{align}
 dx^o(t) =& A(t)x^o(t)dt + B(t)u_t^o dt + G(t)dW(t), \hst x^o(0)=x_0 \label{ex5} \\  
  d\psi^o(t)=& -A^*(t)\psi^o(t)dt   - H(t) x^o(t) dt  -V_{Q^o}(t) dt+ Q^o(t) dW(t), \hst \psi^o(T)=M(T) x^o(T) \label{ex6}\\
  V_{Q^o}(t)=&0, \hst Q^o(t)= \Sigma(t) G(t). \label{vq1}
\end{align}
  Then $\{ \psi^o(t): t \in [0,T]\}$ is given by (\ref{ex9nn}) with $F=0, E=0$, hence
\begin{align}
\psi^o(t)=\Phi^*(T,t)M(T) x^o(T) + \int_{t}^T \Phi^*(s,t) \Big\{ H(s) x^o(s) ds  - Q^o(s) dW(s)  \Big\}. \label{ex9}
\end{align}  
For any admissible decision $u$ and corresponding $(x(\cdot), \psi(\cdot))$ define their filtered versions by 
\bes
\overline{x}(t) \tri  {\mathbb E} \Big\{ x(t) |{\cal G}_{0,t}^{x}\Big\}= x(t), \hst \overline{\psi}(t) \tri  {\mathbb E} \Big\{ \psi(t) |{\cal G}_{0,t}^{x}\Big\}, \hst   \overline{u}(t) \tri  {\mathbb E} \Big\{ u_t |{\cal G}_{0,t}^{x}\Big\}= u_t, \hst    t \in [0,T],
\ees
and their predicted versions by 
\bes
\overline{x}(s,t) \tri  {\mathbb E} \Big\{ x(s) |{\cal G}_{0,t}^{x}\Big\}, \hst \overline{\psi}(s,t) \tri  {\mathbb E} \Big\{ \psi(s) |{\cal G}_{0,t}^{x}\Big\}, \hst  \overline{u}(s,t) \tri  {\mathbb E} \Big\{ u_s |{\cal G}_{0,t}^{x}\Big\},   \hst  0 \leq  t \leq s \leq T.
\ees
From (\ref{ex8fa})  the optimal strategy is
\bea
u_t^o \equiv -R^{-1}(t) B^*(t)  \overline{\psi^o}(t) , \hst t \in [0,T]. \label{ex8f}
\eea
Taking conditional expectations on both sides of (\ref{ex9}) with respect to ${\cal G}_{0,t}^{x^o}$ yields
\begin{align}
\overline{\psi^o}(t)= \Phi^*(T,t)M(T) \overline{x^o}(T,t) + \int_{t}^T \Phi^*(s,t)  H(s) \overline{x^o}(s,t) ds,   \hst t \in [0,T], \label{ex10f}
\end{align}  
where we utilized the fact that   $${\mathbb E} \Big\{ \int_{t}^T \Phi^*(s,t)   Q^o(s) dW(s)|{\cal G}_{0,t}^{x^o} \Big\} ={\mathbb E} \Big\{  {\mathbb E} \Big\{  \int_{t}^T \Phi^*(s,t)   Q^o(s) dW(s) |{\mathbb  F}_{0,t} \Big\}  |{\cal G}_{0,t}^{x^o} \Big\}  =    0.$$
The predictor version of $x^o(\cdot)$ is obtained from (\ref{ex5}) utilizing the fact that the last right hand side of this equation is a stochastic integral with respect to Brownian motion, hence
\begin{align}
& d \overline{x^o}(s,t)= A(s) \overline{x^o}(s,t)dt+ B(s)\overline{u^o}(s,t), \hst t <s \leq T, \label{ex10ff} \\
& \overline{x^o}(t,t)= \overline{x^o}(t)=x^o(t), \hst t \in [0,T). \label{ex10fff} 
 \end{align}
Since for any  policy and hence for the optimal $u^o$, $\{\overline{x^o}(s,t): 0\leq t \leq s \leq T\}$ is uniquely determined from (\ref{ex10ff})  and the current value $\overline{x^o}(t,t)={x^o}(t)$ via (\ref{ex5}), then (\ref{ex10f}) can be expressed via 
\bea
\overline{\psi^o}(t)=K(t) \overline{x^o}(t)=K(t) x^o(t), \hst t \in [0,T], \label{ex15f}
\eea
where $K(\cdot)$ determines the operator to the one expressed via (\ref{ex10f}).  Substituting (\ref{ex15f}) into (\ref{ex8f}) we obtain (\ref{ex16f}).
Let $\{\Psi_K(t,s): 0\leq s \leq t \leq T\}$ denote the transition operator of $A_K(t) \tri \Big(A(t)-B(t)R^{-1}(t) B^*(t) K(t)\Big)$ and recall that  the identities $\frac{\partial}{\partial t} \Psi_K(t,s) = A_K(t) \Psi_K(t,s), 0 \leq s \leq t \leq T$,  $\frac{\partial}{\partial t} \Psi_K(s,t) =  -\Psi_K(s,t)A_K(t), 0 \leq t \leq s \leq T$.\\
Next, we determine $K(\cdot)$. Substituting the solution of (\ref{ex10ff}), (\ref{ex10fff}), specifically, $\overline{x^o}(s,t) = \Psi_K(s,t) {x^o}(t), 0\leq t \leq s \leq T$ into (\ref{ex10f}) we have 
\begin{align}
\overline{\psi^o}(t)= \Big\{\Phi^*(T,t)M(T) \Psi_K(T,t)  + \int_{t}^T \Phi^*(s,t)  H(s) \Psi_K(s,t) ds \Big\}{x^o}(t),   \hst t \in [0,T], \label{ex17f}
\end{align}  
and thus $K(\cdot)$ is identified by the operator  
\bea
K(t) \tri \Phi^*(T,t)M(T) \Psi_K(T,t)  + \int_{t}^T \Phi^*(s,t)  H(s) \Psi_K(s,t) ds,   \hst t \in [0,T]. \label{ex18f}
\eea
Differentiating both sides of (\ref{ex18f})  yields the following differential equation for $K(\cdot)$.
\begin{align}
\dot{K}(t) &=  \frac{\partial  }{\partial t} \Phi^*(T,t)  M(T) \Psi_K(T,t)  +\Phi^*(T,t)M(T)  \frac{\partial  }{\partial t}   \Psi_K(T,t)    -H(t)   \nonumber \\
&  +       \int_{t}^T    \frac{\partial  }{\partial t}  \Phi^*(s,t)  H(s)   \Psi_K(s,t) ds  +       \int_{t}^T    \Phi^*(s,t)  H(s)\frac{\partial  }{\partial t}    \Psi_K(s,t) ds \nonumber \\
 &=-A^*(t) \Phi^*(T,t)  M(T) \Psi_K(T,t)  -\Phi^*(T,t)M(T)     \Psi_K(T,t)A_K(t)    -H(t)   \nonumber \\
&  -       \int_{t}^T    A^*(t)  \Phi^*(s,t)  H(s)   \Psi_K(s,t) ds  -       \int_{t}^T    \Phi^*(s,t)  H(s)    \Psi_K(s,t) A_K(t) ds.  \label{ex19}
\end{align}
Using (\ref{ex18f}) in the previous equations we obtain the  matrix differential equation (\ref{ex20}), (\ref{ex21}).\\
An alternative approach is to utilize (\ref{ex15g}), (\ref{exg17}), (\ref{ex18g}) (with $\kappa_i, b, F, E, s_i=0$) which implies 
\bea
\psi^o(t)= \Sigma(t) x^o(t) + \int_{t}^T \Phi^*(s,t) \Sigma(s) B(s) u_s^o ds. \label{ex22}
\eea
 Then replace $u^o(\cdot)$ in (\ref{ex22}) by (\ref{ex8f}) and  take conditional  expectation  to obtain 
 \bea
\overline{\psi}^o(t)= \Sigma(t) x^o(t) - \int_{t}^T \Phi^*(s,t) \Sigma(s) B(s) R^{-1}(s) B^*(s) {\mathbb E} \Big\{ \psi^o(s) | {\cal G}_{0,t}^{x^o} \Big\} ds. \label{ex23}
\eea
Next, assume $\overline{\psi}^o(t)= K(t) x(t)$, for some $K(\cdot)$, and then  substitute this in (\ref{ex23}) to obtain 
 \bea
K(t)= \Sigma(t) - \int_{t}^T \Phi^*(s,t) \Sigma(s) B(s) R^{-1}(s) B^*(s) K(s) \Psi_K(s,t) ds. \label{ex24}
\eea
By utilizing the equation for $\Sigma(\cdot)$ it can be shown that (\ref{ex24}) is a solution of (\ref{ex20}), (\ref{ex21}).\\
The previous calculations demonstrate how to compute the optimal strategy when both decision variables are based on centralized  information structures, and its is precisely the optimal strategy obtained via variety of other methods in the literature. \\
Note that certain computations presented above are also required to compute an expression for the estimate $\widehat{\psi^o}(t)$ entering the fixed point equation (\ref{fp1}). \\
Finally, one can verify that the necessary conditions of optimality of Theorem~\ref{theorem7.1} utilized to derive the above optimal strategy are also sufficient. Specifically, in view of Theorem~\ref{theorem7.1} it suffices to show convexity of $\varphi(x)=\frac{1}{2} \la x,M(T)x\ra + \la x, N(T) \ra$ and joint convexity of the Hamiltonian ${\cal H}(t,x,\psi,Q,u)$  in $(x,u)$. Since $M(T)\geq 0$ then $\varphi(x)$ is convex, and since $H(\cdot)\geq 0, R(\cdot)>0$ then  ${\cal H}(t,x,\psi,Q,u)$ is convex in $(x,u)$. \\

\subsection{Decentralized Information Structures for LQF}
\label{dec}

In this section we invoke the minimum principle to compute the optimal strategies for team games of Linear-Quadratic Form. We consider decentralized strategies based on 1) nonanticipative information structures, and 2) feedback information structures. Without loss of generality we assume  the  distributed stochastic dynamical decision systems consists of  an interconnection of two subsystems,  each governed by a linear stochastic differential equation with coupling. The generalizations to an arbitrary number of interconnected subsystems will be given as a corollary.\\
Consider the distributed dynamics described below.

\begin{align}
\mbox{\bf Subsystem Dynamics 1:}&  \nonumber \\
  dx^1(t)=& A_{11}(t)x^1(t)dt + B_{11}(t)u_t^1dt  + G_{11}(t) dW^1(t)   \nonumber \\
  &+ A_{12}(t)x^2(t)dt  + B_{12}(t)u_t^2 dt , \hst   x^1(0)=x^1_0, \hso   t \in (0,T],   \label{ex30} \\ \nonumber \\
\mbox{\bf Subsystem Dynamics 2:}&  \nonumber  \\
dx^2(t)=&  A_{22}(t)x^2(t)dt  + B_{22}(t)u_t^2 dt + G_{22}(t) dW^2(t)  \nonumber \\
&+ A_{21}(t)x^1(t)dt + B_{21}u_t^1dt , \hst x^2(0)=x^2_0, \hso  t \in (0,T] \label{ex31} 
\end{align}
For any $t \in [0,T]$ the information structure       of  $u_t^1$ of subsystem $1$ is  the $\sigma-$algebra ${\cal G}_{0,t}^{{1}}$, and 
  information structure of  $u_t^2$ of subsystem $2$ is the $\sigma-$algebra ${\cal G}_{0,t}^{{2}}$. These  information structures are  defined  shortly.\\

{\bf Pay-off  Functional: }\\

\begin{align}
J(u^1,u^2) =& \frac{1}{2}  {\mathbb E} \Big\{ \int_{0}^T \Big[ \la \left( \begin{array}{c} x^{1}(t) \\ x^{2}(t) \end{array} \right), H(t)   \left(\begin{array}{c} x^1(t) \\ x^2(t) \end{array} \right)      \ra
+ \la \left( \begin{array}{c} u_t^{1}(t) \\ u_t^{2}(t) \end{array} \right), R(t)   \left(\begin{array}{c} u_t^1(t) \\ u_t^2(t) \end{array} \right) \ra         \Big]dt \nonumber \\
& + \la   \left( \begin{array}{c} x^{1}(T) \\ x^{2}(T) \end{array} \right), M(T)   \left(\begin{array}{c} x^1(T) \\ x^2(T) \end{array} \right) \ra            \Big\}.  \label{ex34}
\end{align}
We assume that the initial condition $x(0)$, the system Brownian motion $\{W(t): t \in [0,T]\}$, and the observations Brownian motion $\{B^1(t): t \in [0,T]\}$, and $\{B^2(t): t \in [0,T]\}$ are mutually independent and $x(0)$ is Gaussian $({\mathbb E}(x(0)), Cov(x(0)))=(\bar{x}_0, P_0).$

\noi Define the augmented variables by 
\begin{align}
x \tri \left(\begin{array}{c} x^1 \\ x^2 \end{array} \right), \hso u \tri \left(\begin{array}{c} u^1 \\ u^2 \end{array} \right),  \hso \psi \tri \left(\begin{array}{c} \psi^1 \\ \psi^2 \end{array} \right), \hso Q \tri \left(\begin{array}{c} Q^1 \\ Q^2 \end{array} \right), \hso
W \tri & \left(\begin{array}{c} W^1 \\ W^2 \end{array} \right)
\end{align}
and matrices by
\begin{align}
 A \tri & \left[\begin{array}{cc} A_{11}  & A_{12} \\ A_{21} & A_{22} \end{array} \right], \: B \tri \left[\begin{array}{cc} B_{11}  & B_{12} \\ B_{21} & B_{22} \end{array} \right], \:
  B^{(1)} \tri \left[ \begin{array}{c} B_{11}  \\ B_{21} \end{array} \right], \: B^{(2)} \tri \left[ \begin{array}{c} B_{12} \\ B_{22} \end{array} \right],  \: G\tri \left[\begin{array}{cc} G_{11}  & 0 \\ 0 & G_{22} \end{array} \right]. \nonumber \\ \nonumber
\end{align}
Let $(x^o(\cdot), \psi^o(\cdot),  Q^o(\cdot))$ denote the solutions of the Hamiltonian system, corresponding to the optimal control $u^o$, then 
\begin{align}
 dx^o(t) =& A(t)x^o(t)dt + B(t)u_t^o dt + G(t)dW(t), \hst x^o(0)=x_0, \label{nex5} \\ \nonumber \\
  d\psi^o(t)=& -A^*(t)\psi^o(t)dt   - H(t) x^o(t) dt  -V_{Q}^o(t) dt 
+ Q^o(t) dW(t), \hst \psi^o(T)=M(T) x^o(T), \label{nex6} \\
V_{Q^o}(t)=&0, \hst Q^o(t)=\Sigma(t) G(t), \hst \psi^o(t)= \Sigma(t) x^o(t) + \beta^o(t), \label{mv2}
\end{align}
where $\Sigma(\cdot), \beta^o(\cdot)$ are given by (\ref{exg17}), (\ref{ex18g}) with $s_i, \kappa_i, b, F, E=0$.  
 The optimal decisions $\{(u_t^{1,o}, u_t^{2,o}): 0 \leq t\leq T\}$ are obtained from (\ref{ex3}) with $\sigma(t,x,u)=G(t), b=0, F=0, E=0, m=0$, and they are given by 
\begin{align}
{\mathbb E} & \Big\{ {\cal H}_{u^1} (t,x^{1,o}(t), x^{2,o}(t), \psi^{1,o}(t),\psi^{2,o}(t), Q^{1,o}(t),Q^{2,o}(t), u_t^{1,o}, u_t^{2,0}) |{\cal G}_{0,t}^1\Big\} =0, \nonumber \\
& \hst a.e.t \in [0,T], \hso {\mathbb P}|_{{\cal G}_{0,t}^{1}}-a.s. \label{ex35}
\end{align}
\begin{align}
{\mathbb E} & \Big\{ {\cal H}_{u^2} (t,x^{1,o}(t), x^{2,o}(t), \psi^{1,o}(t),\psi^{2,o}(t), Q^{1,o}(t),Q^{2,o}(t), u_t^{1,o}, u_t^{2,0}) |{\cal G}_{0,t}^2\Big\} =0, \nonumber \\
& \hst a.e.t \in [0,T], \hso {\mathbb P}|_{{\cal G}_{0,t}^{2}}-a.s. \label{ex36}
\end{align}
From (\ref{ex35}), (\ref{ex36}) the optimal decisions are 
\bea
u_t^{1,o} = -R_{11}^{-1}(t)   B^{(1),*}(t)  {\mathbb E} \Big\{ \psi^{o}(t) |{\cal G}_{0,t}^1\Big\} -R_{11}^{-1}(t)R_{12}(t) {\mathbb E} \Big\{u_t^{2,o} | {\cal G}_{0,t}^1\Big\}  ,  \hst t \in [0,T]. \label{ex41}
\eea
\bea
u_t^{2,o} = -R_{22}^{-1}(t) B^{(2),*}(t)  {\mathbb E} \Big\{ \psi^{o}(t) |{\cal G}_{0,t}^2\Big\} -R_{22}^{-1}(t)R_{21}(t) {\mathbb E} \Big\{u_t^{1,o} | {\cal G}_{0,t}^2\Big\}.  \hst t \in [0,T]. \label{ex42}
\eea
From the previous expressions we notice the following. 

\begin{description}
\item[(O6):]
The optimal strategies (\ref{ex41}), (\ref{ex42})  illustrate  the signaling between $u^1$ and $u^2$, which is   facilitated by the coupling in the pay-off via $R(\cdot)$, and the coupling in the state dynamics of $x^1$ and $x^2$ via $\psi^o(t)=\Sigma(t) x^o(t)+ \beta^o(t)$. Clearly,  $u^{1,o}$  estimates the optimal decision of  subsystem 2,  $u^{2,o}$, and  the adjoint processes $\psi^{o}$ from its observations, and vice-versa. This coupling is simplified if we consider a simplified model of dynamical coupling between subsystems $x^1, x^2$ and/or  nested information structures, i.e.,   ${\cal G}_{0,t}^2 \subset  {\cal G}_{0,t}^1$. Moreover, if we consider  no coupling through the pay-off, i.e., a diagonal $R(\cdot)$, then the second right hand side terms in (\ref{ex41}), (\ref{ex42}) will be zero, implying that the signaling between $u^{1,o}, u^{2,o}$ is done via the adjoint process $\psi^o$.  
\end{description}

Let $ \phi(\cdot)$ be any   square integrable and ${\mathbb F}_T-$adapted matrix-valued process or scalar-valued processes,  and define its filtered and predictor  versions by   
\bes
\pi^i (\phi)(t) \tri {\mathbb E} \Big\{ \phi(t) | {\cal G}_{0,t}^i \Big\}, \hst  \pi^i (\phi)(s,t) \tri {\mathbb E} \Big\{ \phi(s) | {\cal G}_{0,t}^i  \Big\}, \hst t \in [0,T], \hso s \geq t, \hso i=1,2.
\ees
For any admissible  decision $u$ and corresponding  $(x(\cdot), \psi(\cdot))$ define their  filter versions  with respect to ${\cal G}_{0,t}^i$ for $i=1, 2$,  by 
\bes
\pi^i(x)(t)  \tri    \left[ \begin{array}{c}  {\mathbb E} \Big\{x^{1}(t)  |  {\cal G}_{0,t}^i \Big\}   \\  {\mathbb E}  \Big\{ x^{2}(t) |  {\cal G}_{0,t}^i \Big\}  \end{array}     \right]  \equiv  \widehat{x}^i(t), \hst t \in [0,T], \hso i=1,2, 
\ees
\bes
 \pi^i(\psi)(t) \tri \left[\begin{array}{c}  {\mathbb E} \Big\{   \psi^{1}(t)  |  {\cal G}_{0,t}^i \Big\}   \\   {\mathbb E}  \Big\{ \psi^{2}(t)   |    {\cal G}_{0,t}^i\Big\}  \end{array}    \right]  \equiv {\widehat{\psi}}^i(t),
    \hst t \in [0,T], \hso i=1,2,
    \ees
\bes
\pi^i(u)(t)  \tri    \left[ \begin{array}{c}  {\mathbb E} \Big\{u_t^{1}  |  {\cal G}_{0,t}^i \Big\}   \\  {\mathbb E}  \Big\{ u_t^{2}|  {\cal G}_{0,t}^i \Big\}  \end{array}     \right]  \equiv \widehat{u}^i(t), \hst t \in [0,T] , \hso i=1,2, 
\ees
and their predictor versions by
\bes
\pi^i(x)(s,t)  \tri    \left[ \begin{array}{c}  {\mathbb E} \Big\{x^{1}(s)  |  {\cal G}_{0,t}^i \Big\}   \\  {\mathbb E}  \Big\{ x^{2}(s) |  {\cal G}_{0,t}^i\Big\}  \end{array}     \right]  \equiv \widehat{x}^i(s,t), \hst t \in [0,T], \hso s\geq t, \hso i=1,2, 
\ees
\bes
 \pi^i(\psi)(s,t) \tri \left[\begin{array}{c}  {\mathbb E} \Big\{   \psi^{1}(s)  |  {\cal G}_{0,t}^i \Big\}   \\   {\mathbb E}  \Big\{ \psi^{2}(s)   |    {\cal G}_{0,t}^i\Big\}  \end{array}    \right] \equiv \widehat{\psi}^i(s,t),
    \hst t \in [0,T], \hso s \geq t, \hso i=1,2.
\ees
\bes
\pi^i(u)(s,t)  \tri    \left[ \begin{array}{c}  {\mathbb E} \Big\{u_s^{1}  |  {\cal G}_{0,t}^i \Big\}   \\  {\mathbb E}  \Big\{ u_s^{2} |  {\cal G}_{0,t}^i \Big\}  \end{array}     \right]    \equiv \widehat{u}^i(s,t), \hst t \in [0,T], \hso s\geq t, \hso i=1,2, 
\ees
From (\ref{ex41}), (\ref{ex42}) the optimal decisions are 
\begin{align}
u_t^{1,o} \equiv  -R_{11}^{-1}(t) B^{(1),*}(t) \pi^1(\psi^o)(t)   -R_{11}^{-1}(t)R_{12}(t) {\mathbb E} \Big\{u_t^{2,o} | {\cal G}_{0,t}^1\Big\},   \hst t \in [0,T],    \label{ex41a}
\end{align}
\begin{align}
u_t^{2,o}  \equiv  -R_{22}^{-1}(t) B^{(2),*}(t) \pi^{w^2}(\psi^o)(t)-R_{22}^{-1}(t)R_{21}(t) {\mathbb E} \Big\{u_t^{1,o} | {\cal G}_{0,t}^2\Big\}, \hst t \in [0,T].  \label{ex42a}
\end{align}
The previous  optimal decisions  require  the conditional estimates \\$\{(\pi^1(\psi^o)(t),    \pi^{2}(\psi^o)(t)): 0\leq t \leq T\}$. These are obtained by  taking conditional expectations of (\ref{ex9}) giving   
\begin{align}
\pi^i(\psi^{o})(t)= \Phi^*(T,t)M(T) \pi^i(x^{o})(T,t) + \int_{t}^T \Phi^*(s,t)  H(s) \pi^i(x^{o})(s,t)ds,   \hst t \in [0,T], \hso i=1,2. \label{ex49}
\end{align}  
Before we proceed further we shall specify the information structures available to the DMs.\\

\noi {\bf Nonanticipative Information Structures.} The information structure available to $u^1$ is ${\cal G}_{0,t}^1 \tri \sigma\{ W^1(s): 0 \leq s \leq t\} \equiv {\cal G}_{0,t}^{W^1}$, and the information structure available to $u^2$ is ${\cal G}_{0,t}^2 \tri \sigma\{ W^2(s): 0 \leq s \leq t\} \equiv {\cal G}_{0,t}^{W^2}$.  Therefore, by denoting $\pi^{w^i}(\cdot)(\cdot)$ the conditional expectation with respect to ${\cal G}_{0,\cdot}^{W^i},i=1,2$,  for any admissible  decision, the filtered versions of $x(\cdot)$ based on this information structures  are given by the following  stochastic differential equations  \cite{liptser-shiryayev1977} (Theorem~8.2).
\begin{align}
d\pi^{w^1}(x)(t) =& A(t) \pi^{w^1}(x)(t)dt + B^{(1)}(t) u_t^{1} dt 
 + B^{(2)}(t) \pi^{w^1}(u^{2})(t) dt \nonumber \\
&+G_{11}(t)dW^1(t), \hso \pi^{w^1}(x)(0)=\bar{x}_0, \label{ex50}
\end{align}
\begin{align}
d\pi^{w^2}(x)(t) =& A(t) \pi^{w^2}(x)(t)dt + B^{(2)}(t) u_t^{2} dt + B^{(1)}(t)\pi^{w^2}(u^{1})(t) dt  \nonumber \\
&+G_{22}dW^2(t), \hso \pi^{w^2}(x)(0)
=\bar{x}_0. \label{ex51}
\end{align}
From the previous filtered versions of $x(\cdot)$ it is clear that  subsystem $1$ estimates the augmented state vector and the  actions of subsystem $2$ based on its own observations, namely, $\pi^{w^1}(u^{2})(\cdot)$ and subsystem $2$ estimates the augmented state vector and the actions of subsystem $1$ based on its own observations, namely, $\pi^{w^2}(u^{1})(\cdot)$.  \\

\noi For any admissible decision $u$ the predicted versions of $x(\cdot)$ are obtained from (\ref{ex50}) and (\ref{ex51}) as follows. Utilizing the identity $\pi^{w^i}(x)(s,t)={\mathbb E}  \Big\{  {\mathbb E} \Big\{ x(s)| {\cal G}_{0,s}^{W^{i} }\Big\} |  {\cal G}_{0,t}^{W^{i}} \Big\}= {\mathbb E} \Big\{ \pi^{w^i}(x)(s) | {\cal G}_{0,t}^{W^{i}} \Big\}$, for $0 \leq t \leq s \leq T$ then 
\begin{align}
&\frac{d}{ds}\pi^{w^1}(x)(s,t) = A(s) \pi^{w^1}(x)(s,t) + B^{(1)}(s) \pi^{w^1}(u^1)(s,t) +B^{(2)}(s) \pi^{w^1}(u^{2})(s,t), 
 \hst    t < s \leq T, \label{ex52} \\
&\pi^{w^1}(x)(t,t)=\pi^{w^1}(x)(t), \hst t \in [0,T), \label{ex52a} 
\end{align}
\begin{align}
&\frac{d}{d s}\pi^{w^2}(x)(s,t) = A(s) \pi^{w^2}(x)(s,t) +B^{(2)}(s) \pi^{w^2}(u^{1})(s,t)+ B^{(1)}(s) \pi^{w^2}(u^{1})(s,t), \hst  t < s \leq T, \label{ex53}  \\
&\pi^{w^2}(x)(t,t)=\pi^{w^2}(x)(t), \hst t \in [0,T).
\label{ex53a}
\end{align}
Since for a given admissible policy and observation paths, $\{\pi^{w^1}(x)(s,t): 0\leq t \leq s \leq T\}$ is  determined from (\ref{ex52}) and its current value $\pi^{w^1}(x^o)(t,t)=\pi^{w^1}(x)(t)$, and $\{\pi^{w^2}(x)(s,t): 0\leq t \leq s \leq T\}$ is  determined from (\ref{ex53}), and its current value $\pi^{w^2}(x)(t,t)=\pi^{w^2}(x)(t)$, then (\ref{ex49}) can be expressed via
\bea
\pi^{w^i}(\psi^o)(t) = K^i(t) \pi^{w^i}(x^o)(t) + r^i(t), \hst t\in [0,T], \hso i=1,2. \label{in54}
\eea
 where $K^i(\cdot), r^i(\cdot)$  determines the operators  to the one expressed via (\ref{ex49}), for $i=1,2$. Utilizing (\ref{in54})  into (\ref{ex41a}) and (\ref{ex42a}) then 
\begin{align}
u_t^{1,o} \equiv  -R_{11}^{-1}(t) B^{(1),*}(t) \Big\{K^1(t) \pi^{w^1}(x^o)(t) +r^1(t) \Big\}  -R_{11}^{-1}(t)R_{12}(t) \pi^{w^1}({u_t^{2,o}})(t),   \hst t \in [0,T],    \label{in55}
\end{align}
\begin{align}
u_t^{2,o}  \equiv  -R_{22}^{-1}(t) B^{(2),*}(t) \Big\{ K^2(t)    \pi^{w^2}(x^o)(t) +r^2(t)\Big\} -R_{22}^{-1}(t)R_{21}(t) \pi^{w^2}({u^{1,o}})(t), \hst t \in [0,T].  \label{in56}
\end{align}
Let $\{\Psi_{K^i}(t,s): 0\leq s \leq t \leq T\}$ denote the transition operator of $A_{K^i}(t) \tri \Big(A(t)-B^{(i)}(t)R_{ii}^{-1}(t) B^{(i),*}(t) K^i (t) \Big)$, for $i=1,2$. \\
 Next, we determine $K^i(\cdot), r^i(\cdot), i=1,2$. Substituting  the previous equations into (\ref{ex52}), (\ref{ex52a}) and (\ref{ex53}), (\ref{ex53a}) then
  \begin{align}
 \pi^{w^1}(x^o)(s,t)=& \Psi_{K^1}(s,t) \pi^{w^1}(x^o)(t) -\int_{t}^s \Psi_{K^1}(s,\tau) B^{(1)}(\tau)R_{11}^{-1}(\tau)B^{(1),*}(\tau) r^1(\tau)d\tau  \nonumber \\
 &-\int_t^s \Psi_{K^1}(s,\tau) B^{(1)}(\tau)R_{11}^{-1}(\tau) R_{12}(\tau) \pi^{w^1}(u^{2,o})(\tau,t)d\tau  \nonumber \\
 &+ \int_{t}^s \Psi_{K^1}(s, \tau) B^{(2)}(\tau) \pi^{w^1}(u^{2,o})(\tau,t) d\tau , \hst t \leq s \leq T, \label{pr1}
 \end{align}
  \begin{align}
 \pi^{w^2}(x^o)(s,t)=& \Psi_{K^2}(s,t) \pi^{w^2}(x^o)(t)  -\int_{t}^s \Psi_{K^2}(s,\tau) B^{(2)}(\tau)R_{22}^{-1}(\tau)B^{(2),*}(\tau) r^2(\tau)d\tau  \nonumber \\
 &-\int_t^s \Psi_{K^2}(s,\tau) B^{(2)}(\tau)R_{22}^{-1}(\tau) R_{21}(\tau) \pi^{w^2}(u^{1,o})(\tau,t)d\tau  \nonumber \\
 &+ \int_{t}^s \Psi_{K^2}(s, \tau) B^{(1)}(\tau) \pi^{w^2}(u^{1,o})(\tau,t) d\tau , \hst t \leq s \leq T. \label{pr2}
 \end{align}
Since $u_t^{1,o}$ is ${\cal G}_{0,t}^{W^1}-$measurable and $u_t^{2,o}$ is ${\cal G}_{0,t}^{W^2}-$measurable, and ${\cal G}_{0,t}^{W^1}$ and ${\cal G}_{0,t}^{W^2}$ are independent, then  $\pi^{w^1}(u^{2,o})(\tau,t)= {\mathbb E} \big(u_\tau^2\big)\equiv \overline{u^{2,o}}(\tau)$,  $\pi^{w^2}(u^{1,o})(\tau,t)= {\mathbb E} \big(u_\tau^1\big)\equiv \overline{u^{1,o}}(\tau)$,  $0 \leq t \leq \tau \leq T$. Utilizing the last observation we show in the next main theorem that the optimal DM strategies are finite dimensional (i.e., given in terms of finite number of statistics), and that each optimal strategy is linear function of  the augmented state estimate based on his information, and the average value of the other optimal strategy. The computation of the average optimal strategies can be expressed in fixed point form.

\begin{theorem}(Optimal decentralized strategies for {\bf LQF}) \\
\label{dise}
Given a {\bf LQF} game the optimal decisions $(u^{1,o}, u^{2,o})$ are given 
\begin{align}
u_t^{1,o} \equiv  -R_{11}^{-1}(t) B^{(1),*}(t) \Big\{K^1(t) \pi^{w^1}(x^o)(t) +r^1(t) \Big\}  -R_{11}^{-1}(t)R_{12}(t) \overline{u^{2,o}}(t),   \hst t \in [0,T],    \label{ex49i}
\end{align}
\begin{align}
u_t^{2,o}  \equiv  -R_{22}^{-1}(t) B^{(2),*}(t) \Big\{ K^2(t)    \pi^{w^2}(x^o)(t) +r^2(t)\Big\} -R_{22}^{-1}(t)R_{21}(t) \overline{u^{1,o}}(t), \hst t \in [0,T].  \label{ex49j}
\end{align}
where  $\pi^{w^i}(x^o)(\cdot), i=1,2$ satisfy the  linear non-homogeneous stochastic differential equations
\begin{align}
d\pi^{w^1}(x)(t) =& A(t) \pi^{w^1}(x)(t)dt + B^{(1)}(t) u_t^{1,o} dt 
 + B^{(2)}(t) \overline{u^{2,o}}(t) dt \nonumber \\
&+G_{11}(t)dW^1(t), \hso \pi^{w^1}(x)(0)=\bar{x}_0, \label{ex500}
\end{align}
\begin{align}
d\pi^{w^2}(x)(t) =& A(t) \pi^{w^2}(x)(t)dt + B^{(2)}(t) u_t^{2,o} dt + B^{(1)}(t)\overline{u^{1,o}}(t) dt  \nonumber \\
&+G_{22}dW^2(t), \hso \pi^{w^2}(x)(0)
=\bar{x}_0. \label{ex510}
\end{align}
and $\Big(K^i(\cdot), r^i(\cdot), \overline{x^o}(\cdot), \overline{u^{i,o}}(\cdot) \Big),i=1,2$ are solutions of  the ordinary differential equations (\ref{ex20pi}), (\ref{ex49f}), (\ref{ex49g}), (\ref{ex49h}), (\ref{ex101}), (\ref{ex106}) below.
\begin{align}
\dot{K}^i(t) &+ A^*(t) K^i(t)    + K^i(t) A(t)
 -K^i(t) B^{(i)}(t) R_{ii}^{-1}(t) B^{(i),*}(t) K^i(t)   \nonumber \\
 &+H(t)=0, \hst t \in [0,T), \hso i=1,2,   \label{ex20pi} \\
 K^i(T)&=M(T), \hso i=1,2 , \label{ex49f}
\end{align}
\begin{align}
\dot{r}^1(t)=&\Big\{  -A^{*}(t)  +\Phi^*(T,t) M(T)  \Psi_{K^1}(T,t) B^{(1)}(t) R_{11}^{-1}(t)B^{(1),*}(t) \nonumber \\                
&+ \Big(\int_{t}^T \Phi^*(s,t) H(s)  \Psi_{K^1}(s,t) ds \Big) B^{(1)}(t) R_{11}^{-1}(t) B^{(1),*}(t) \Big\} r^1(t)  \nonumber\\
&- \Big( \int_{t}^T  \Phi^*(s,t)  H(s)  \Psi_{K^1}(s, t) ds\Big) \Big(B^{(2)}(t) 
-B^{(1)}(t)R_{11}^{-1}(t) R_{12}(t) \Big)
\overline{u^{2,o}}(t), \nonumber \\
&-  \Phi^*(T,t)M(T) \Psi_{K^1}(T, t) \Big( B^{(2)}(t) -B^{(1)}(t)R_{11}^{-1}(t) R_{12}(t) \Big)  \overline{u^{2,o}}(t)   
   \hso t \in [0,T), \hso r^1(T)=0,   \label{ex49g} \\
\dot{r}^2(t)=& \Big\{  -A^{*}(t) +\Phi^*(T,t) M(T)  \Psi_{K^2}(T,t) B^{(2)}(t) R_{22}^{-1}(t)B^{(2),*}(t) \nonumber \\
&+ \Big(\int_{t}^T \Phi^*(s,t) H(s)  \Psi_{K^2}(s,t) ds \Big) B^{(2)}(t) R_{22}^{-1}(t) B^{(2),*}(t) \Big\}r^2(t) \nonumber \\
&- \Big( \int_{t}^T  \Phi^*(s,t)  H(s)  \Psi_{K^2}(s, t) ds\Big) \Big(B^{(1)}(t)  -B^{(2)}(t)R_{22}^{-1}(t) R_{21}(t) \Big)    \overline{u^{1,o}}(t)  \nonumber \\
&-  \Phi^*(T,t)M(T) \Psi_{K^2}(T, t) \Big(B^{(1)}(t)  -B^{(2)}(t)R_{22}^{-1}(t) R_{21}(t) \Big)    \overline{u^{1,o}}(t),
  \hso t \in [0,T), \hso r^2(T)=0,    \label{ex49h}
\end{align}  
\begin{align}
\dot{\overline{x^o}}(t)=A(t) \overline{x^o}(t) + B^{(1)}(t) \overline{u^{1,o}}(t)+ B^{(2)}(t) \overline{u^{2,o}}(t), \hst \overline{x^o}(0)=\overline{x}_0, \label{ex101}
\end{align}
\bea
   \left[ \begin{array}{c} \overline{u^{1,o}}(t) \\ \overline{u^{2,o}}(t) \end{array} \right] = -    \left[ \begin{array}{cc} I & R_{11}^{-1}(t)R_{12}(t)  \\ 
R_{22}^{-1}(t)R_{21}(t) & I \end{array} \right]^{-1}      \left[ \begin{array}{c} R_{11}^{-1}(t) B^{(1),*}(t) \Big\{K^1(t) \overline{x^o}(t) +r^1(t) \Big\} \\
R_{22}^{-1}(t) B^{(2),*}(t) \Big\{ K^2(t)    \overline{x^o}(t) +r^2(t)\Big\} \end{array} \right]. \label{ex106}
\eea

\end{theorem}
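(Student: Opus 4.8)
The plan is to reduce the necessary conditions of Theorem~\ref{theorem7.1}, specialized to the two-subsystem {\bf LQF} game, from conditions involving infinite-dimensional conditional expectations to a finite system of ordinary (and Riccati) differential equations together with a fixed-point relation for the unconditional means. I would begin from the pointwise stationarity conditions (\ref{ex35}), (\ref{ex36}), which (since ${\cal H}_u$ is affine in $u$ and $R$ is invertible) yield the coupled representations (\ref{ex41a}), (\ref{ex42a}) for $(u^{1,o},u^{2,o})$ in terms of the filtered adjoint $\pi^{w^i}(\psi^o)$ and the cross-estimated control ${\mathbb E}\{u^{j,o}\mid{\cal G}_{0,t}^i\}$. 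The entire task is then to express these two objects through finite-dimensional quantities.

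Next I would handle the filtered adjoint. Taking conditional expectations in the explicit adjoint representation (\ref{ex9}) produces (\ref{ex49}), which ties $\pi^{w^i}(\psi^o)(t)$ to the state predictors $\pi^{w^i}(x^o)(s,t)$. The filtered state dynamics (\ref{ex50}), (\ref{ex51}) follow from linear filtering (Liptser--Shiryayev, Theorem~8.2), and differentiating the tower identity $\pi^{w^i}(x)(s,t)={\mathbb E}\{\pi^{w^i}(x)(s)\mid{\cal G}_{0,t}^{W^i}\}$ gives the predictor equations (\ref{ex52})--(\ref{ex53a}). The decisive simplification is the independence $\sigma\{W^1\}\perp\sigma\{W^2\}$: since $u^{1,o}$ is ${\cal G}_{0,t}^{W^1}$-measurable and $u^{2,o}$ is ${\cal G}_{0,t}^{W^2}$-measurable, the cross predictors collapse to deterministic means, $\pi^{w^1}(u^{2,o})(\tau,t)=\overline{u^{2,o}}(\tau)$ and $\pi^{w^2}(u^{1,o})(\tau,t)=\overline{u^{1,o}}(\tau)$; this is precisely what renders the optimal strategies finite-dimensional.

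The core step is to posit the affine representation (\ref{in54}), $\pi^{w^i}(\psi^o)(t)=K^i(t)\pi^{w^i}(x^o)(t)+r^i(t)$, and to verify it while extracting the governing equations. Substituting the candidate strategies (\ref{in55}), (\ref{in56}) into the predictor dynamics (\ref{ex52}), (\ref{ex53}) closes the predictor loop with transition operator $\Psi_{K^i}$, whose explicit solutions are (\ref{pr1}), (\ref{pr2}); inserting these back into (\ref{ex49}) and differentiating in $t$ lets me match the $\pi^{w^i}(x^o)(t)$-coefficient to obtain the decoupled Riccati equations (\ref{ex20pi}), (\ref{ex49f}) for $K^i$, and match the remaining inhomogeneous terms to obtain the linear equations (\ref{ex49g}), (\ref{ex49h}) for $r^i$. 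I expect this matching to be the main obstacle: the $r^i$-equation must correctly absorb both the exogenous forcing $\overline{u^{j,o}}$ and the cost-coupling $R_{ij}$ entering through the predictor, and keeping the two contributions (from the terminal $M(T)$-term and the running $H$-integral in (\ref{ex49})) consistent is where the bookkeeping is delicate.

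Finally I would close the system through the unconditional means. Taking ${\mathbb E}\{\cdot\}$ of the state equation gives (\ref{ex101}) for $\overline{x^o}$, and taking ${\mathbb E}\{\cdot\}$ of (\ref{ex49i}), (\ref{ex49j}) (using ${\mathbb E}\{\pi^{w^i}(x^o)(t)\}=\overline{x^o}(t)$) yields a coupled linear algebraic system in $(\overline{u^{1,o}},\overline{u^{2,o}})$; solving it amounts to inverting the block matrix in (\ref{ex106}), whose invertibility is the Schur-complement condition guaranteed by $R(\cdot)>0$. Sufficiency of these necessary conditions follows exactly as in the centralized case from convexity of $\varphi$ (since $M(T)\geq 0$) and joint convexity of ${\cal H}$ in $(x,u)$ (since $H(\cdot)\geq 0$, $R(\cdot)>0$), so the strategies (\ref{ex49i}), (\ref{ex49j}) are optimal.
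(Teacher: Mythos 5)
Your proposal follows essentially the same route as the paper's proof: the stationarity conditions give the coupled representations of $(u^{1,o},u^{2,o})$, the independence of ${\cal G}_{0,t}^{W^1}$ and ${\cal G}_{0,t}^{W^2}$ collapses the cross-predictors to the deterministic means $\overline{u^{j,o}}$, the affine ansatz (\ref{in54}) is verified by substituting the closed-loop predictor solutions (\ref{pr1}), (\ref{pr2}) into (\ref{ex49}) and identifying $K^i$ and $r^i$ by coefficient matching and differentiation, and the means are closed by taking expectations to obtain (\ref{ex101}) and the linear system (\ref{ex106}). The only addition is your closing sufficiency remark via convexity, which the paper establishes in the centralized section rather than restating here; the argument is otherwise the same.
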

\begin{proof}  
Since $u_t^{1,o}$ is ${\cal G}_{0,t}^{W^1}-$measurable and $u_t^{2,o}$ is ${\cal G}_{0,t}^{W^2}-$measurable, and ${\cal G}_{0,t}^{W^1}$ and ${\cal G}_{0,t}^{W^2}$ are independent,  then 
\begin{align}
\pi^{w^1}(u^2)(s,t) &={\mathbb E} \Big( u_s^2 | {\cal  G}_{0,t}^{W^1} \Big) ={\mathbb E} \Big( u_s^2\Big) \equiv \overline{u^2}(s), \hso t \leq s \leq T, \label{in2} \\ \pi^{w^2}(u^1)(s,t) &={\mathbb E} \Big( u_s^1 | {\cal  G}_{0,t}^{W^2} \Big) ={\mathbb E} \Big( u_s^1 \Big) \equiv \overline{u^1}(s), \hso t \leq s \leq T. \label{in2a}
\end{align}
Substituting (\ref{in2}), (\ref{in2a}) into (\ref{pr1}), (\ref{pr2}), and then  (\ref{pr1}), (\ref{pr2}) into (\ref{ex49}) we have 
\begin{align}
\pi^{w^1}(\psi^{o})(t)=& \Big\{ \Phi^*(T,t)M(T) \Psi_{K^1}(T,t) + \int_{t}^T \Phi^*(s,t)  H(s) \Psi_{K^1}(s,t)ds \Big\} \pi^{w^1}(x^o)(t) \nonumber \\
&+ \Phi^*(T,t)M(T) \int_{t}^T \Psi_{K^1}(T, \tau)\Big( B^{(2)}(\tau)-B^{(1)}(\tau)R_{11}^{-1}(\tau) R_{12}(\tau) \Big) \overline{u^{2,o}}(\tau) d\tau \nonumber \\ 
&+  \int_{t}^T \Phi^*(s,t)  H(s)  \int_{t}^s \Psi_{K^1}(s, \tau)\Big( B^{(2)}(\tau)  -B^{(1)}(\tau)R_{11}^{-1}(\tau) R_{12}(\tau) \Big) \overline{u^{2,o}}(\tau) d\tau ds \nonumber \\
&-\Phi^*(T,t) M(T) \int_{t}^T \Psi_{K^1}(T,\tau) B^{(1)}(\tau) R_{11}^{-1}(\tau)B^{(1),*}(\tau) r^1(\tau) d\tau \nonumber \\
&-\int_{t}^T \Phi^*(s,t) H(s) \int_{t}^s \Psi_{K^1}(s,\tau) B^{(1)}(\tau) R_{11}^{-1}(\tau) B^{(1),*}(\tau)r^1(\tau) d \tau ds,
 \label{ex49a}
\end{align}  
\begin{align}
\pi^{w^2}(\psi^{o})(t)=& \Big\{ \Phi^*(T,t)M(T) \Psi_{K^2}(T,t) + \int_{t}^T \Phi^*(s,t)  H(s) \Psi_{K^2}(s,t)ds \Big\} \pi^{w^2}(x^o)(t) \nonumber \\
&+ \Phi^*(T,t)M(T) \int_{t}^T \Psi_{K^2}(T, \tau) \Big( B^{(1)}(\tau)  
-B^{(2)}(\tau)R_{22}^{-1}(\tau) R_{21}(\tau) \Big) 
 \overline{u^{1,o}}(\tau) d\tau \nonumber \\ 
&+  \int_{t}^T \Phi^*(s,t)  H(s)  \int_{t}^s \Psi_{K^2}(s, \tau) \Big(B^{(1)}(\tau)   -B^{(2)}(\tau)R_{22}^{-1}(\tau) R_{21}(\tau) \Big)              \overline{u^{1,o}}(\tau) d\tau ds\nonumber \\
&-\Phi^*(T,t) M(T) \int_{t}^T \Psi_{K^2}(T,\tau) B^{(2)}(\tau) R_{22}^{-1}(\tau)B^{(2),*}(\tau) r^2(\tau) d\tau \nonumber \\
&-\int_{t}^T \Phi^*(s,t) H(s) \int_{t}^s \Psi_{K^2}(s,\tau) B^{(2)}(\tau) R_{22}^{-1}(\tau) B^{(2),*}(\tau)r^2(\tau) d \tau ds.
 \label{ex49b}
\end{align}  
Comparing (\ref{in54}) with the previous two equations then $K^i(\cdot), i=1,2$ are identified by the operators
\begin{align}
K^i(t)=  \Phi^*(T,t)M(T) \Psi_{K^i}(T,t) + \int_{t}^T \Phi^*(s,t)  H(s) \Psi_{K^i}(s,t)ds, \hst t \in [0,T], \hso i=1,2, \label{ex49c}
\end{align}
and $r^i(\cdot), i=1,2$ by the processes 
\begin{align}
r^1(t)=&\Phi^*(T,t)M(T) \int_{t}^T \Psi_{K^1}(T, \tau)\Big( B^{(2)}(\tau)-B^{(1)}(\tau)R_{11}^{-1}(\tau) R_{12}(\tau) \Big) \overline{u^{2,o}}(\tau) d\tau \nonumber \\ 
&+  \int_{t}^T \Phi^*(s,t)  H(s)  \int_{t}^s \Psi_{K^1}(s, \tau)\Big( B^{(2)}(\tau)  -B^{(1)}(\tau)R_{11}^{-1}(\tau) R_{12}(\tau) \Big) \overline{u^{2,o}}(\tau) d\tau ds \nonumber \\
&-\Phi^*(T,t) M(T) \int_{t}^T \Psi_{K^1}(T,\tau) B^{(1)}(\tau) R_{11}^{-1}(\tau)B^{(1),*}(\tau) r^1(\tau) d\tau \nonumber \\
&-\int_{t}^T \Phi^*(s,t) H(s) \int_{t}^s \Psi_{K^1}(s,\tau) B^{(1)}(\tau) R_{11}^{-1}(\tau) B^{(1),*}(\tau)r^1(\tau) d \tau ds,    \label{ex49d} 
\end{align}
\begin{align}
r^2(t)=& \Phi^*(T,t)M(T) \int_{t}^T \Psi_{K^2}(T, \tau) \Big( B^{(1)}(\tau)  
-B^{(2)}(\tau)R_{22}^{-1}(\tau) R_{21}(\tau) \Big) 
 \overline{u^{1,o}}(\tau) d\tau \nonumber \\ 
&+  \int_{t}^T \Phi^*(s,t)  H(s)  \int_{t}^s \Psi_{K^2}(s, \tau) \Big(B^{(1)}(\tau)   -B^{(2)}(\tau)R_{22}^{-1}(\tau) R_{21}(\tau) \Big)              \overline{u^{1,o}}(\tau) d\tau ds\nonumber \\
&-\Phi^*(T,t) M(T) \int_{t}^T \Psi_{K^2}(T,\tau) B^{(2)}(\tau) R_{22}^{-1}(\tau)B^{(2),*}(\tau) r^2(\tau) d\tau \nonumber \\
&-\int_{t}^T \Phi^*(s,t) H(s) \int_{t}^s \Psi_{K^2}(s,\tau) B^{(2)}(\tau) R_{22}^{-1}(\tau) B^{(2),*}(\tau)r^2(\tau) d \tau ds.  \label{ex49e}
\end{align}  
Differentiating both sides of (\ref{ex49c}) the operators $K^i(\cdot), i=1,2$ satisfy the following matrix differential equations (\ref{ex20pi}), (\ref{ex49f}). 
Differentiating both sides of (\ref{ex49d}), (\ref{ex49e})  the processes $r^i(\cdot), i=1,2$ satisfy the   differential equations (\ref{ex49g}), (\ref{ex49h}). Utilizing (\ref{in2}), (\ref{in2a}) we obtain the optimal strategies (\ref{ex49i}), (\ref{ex49j}).
Next, we determine $\overline{u^{i,o}}$ for $i=1,2$ from (\ref{ex49i}), (\ref{ex49j}). \\
Define the averages 
\begin{align}
\overline{x}(t) \tri {\mathbb E}\Big\{x(t)\Big\}= {\mathbb E} \Big\{  \pi^{w^i}(x)(t)  \Big\}, \hst i=1, 2.
\end{align}
Then $\overline{x^o}(\cdot)$ satisfies the ordinary differential equation (\ref{ex101}).
Taking the expectation of both sides of (\ref{ex49i}), (\ref{ex49j}) we deduce the corresponding  equations
\begin{align}
\overline{u^{1,o}}(t) &=  -R_{11}^{-1}(t) B^{(1),*}(t) \Big\{K^1(t) \overline{x^o}(t) +r^1(t) \Big\}  -R_{11}^{-1}(t)R_{12}(t) \overline{u^{2,o}}(t),   \hst t \in [0,T],    \label{ex102} \\
\overline{u^{2,o}}(t)  &=  -R_{22}^{-1}(t) B^{(2),*}(t) \Big\{ K^2(t)    \overline{x^o}(t) +r^2(t)\Big\} -R_{22}^{-1}(t)R_{21}(t) \overline{u^{1,o}}(t), \hst t \in [0,T].  \label{ex103}
\end{align}
The last two equations can be written in matrix form (\ref{ex106}). This completes the derivation.
\end{proof}

\noi Hence, the optimal strategies are computed from (\ref{ex49i}), (\ref{ex49j}), where the filter equations for $\pi^{w^i}(x^o)(\cdot), i=1,2$ satisfy (\ref{ex500}), (\ref{ex510}), while  $\Big(K^i(\cdot), r^i(\cdot), \overline{u^{i,o}}(\cdot), \overline{x^o}(\cdot) \Big),i=1,2$ are computed off-line utilizing  the ordinary differential equations (\ref{ex20pi}), (\ref{ex49f}), (\ref{ex49g}), (\ref{ex49h}), (\ref{ex101}), (\ref{ex106}). Note that the optimal decentralized strategy $u^{1,o}$ given by (\ref{ex49i})  is a linear function of the state $x^{1,o}(\cdot)$ and ${\mathbb E}( u^{2,o})(\cdot)$, while the state is governed by  (\ref{ex500}) corresponding to $u^{2,o}$ replaced by its average value ${\mathbb E}( u^{2,o})(\cdot)$, and similarly for $u^{2,o}$. The optimal strategies can be further simplified  by considering special structures of  interconnected dynamics, such as, coupling of the subsystems via the DM's, coupling through the pay-off only,  diagonal matrices $R =diag\{R_{11}, R_{12}\}, H=diag\{H_{11}, H_{22}\}, M= diag\{M_{11}, M_{22}\}$, etc.

\noi Further,  Theorem~\ref{dise} can be generalized to an arbitrary number of interconnected system team games. In addition, one may consider feedback information structures, delayed information structures, etc.. \\  These generalization or simplification are stated in the next remark.

\begin{remark}
\label{remPIP}
(Generalizations and Simplifications){\ \\}
\noi{\bf Generalizations.} Theorem~\ref{dise} is easily generalized to the following arbitrary coupled  dynamics
\begin{align}
dx^i(t) =&A_{ii}(t)x^i(t) dt +  B^{(i)} u_t^i dt+ G_{ii} dW^i(t)  \nonumber \\
&+ \sum_{j=1, j\neq i}^N A_{ij}x^j(t) dt + \sum_{ j=1, j\neq i }^N B^{(j)}(t)u_t^j dt , \hso x^i(0)=x_0^i, \hso t \in (0,T], \hso i \in {\mathbb Z}_N \label{R1}
\end{align}
and DM's information structures 
\bea
u_t^i \hso \mbox{is} \hso {\cal G}_{0,t}^{W^i}-\mbox{measurable}, \hso t \in [0,T], \hso i \in {\mathbb Z}_N. \label{R4}
\eea 
The optimal strategies are obvious extensions of the ones given in Theorem~\ref{dise}.\\

\noi{\bf Simplifications.} Several simpler forms can be deduced from the results of Theorem~\ref{dise} by assuming any of the following $R =diag\{R_{11}, R_{12}\}, H=diag\{H_{11}, H_{22}\}, M= diag\{M_{11}, M_{22}\}$. Moreover, simplified strategies can be derived by assuming nested information structures, that is, $u_t^1$ is ${\cal G}_{0,t}^{W^1}-$measurable and $u_t^2$ is ${\cal G}_{0,t}^{W^1,W^2}-$measurable. \\

\noi{\bf Delay Information Structures.} The optimality conditions  hold for any ${\cal G}_{0,t}^i-$measurable DM strategies $u^i, i =1, \ldots, N$. Therefore, one can apply the necessary conditions to DM's information structures 
\bea
u_t^i \hso \mbox{is} \hso {\cal G}_{0,t-\eps_i}^{W^i}-\mbox{measurable}, \hso \eps_i >0, \hso  t \in [0,T], \hso i \in {\mathbb Z}_N. \label{R5}
\eea 
or any other information structures of interest, such as, delayed sharing. \\

\noi{\bf Feedback Information Structures.} The previous generalizations/simplifications also apply to  feedback information structures ${\cal G}_{0,t}^{z^{i,u}}$. Specifically, to derive the corresponding results of Theorem~\ref{dise}, even for the simplest scenario $z^1=x^1, z^2=x^2$, one has to compute  conditional expectations with respect to ${\cal G}_{0,t}^{x^{i,u}}, i=1, 2$, and hence one has to invoke  nonlinear filtering techniques to determine expressions for the filters  $\pi^{x^i}(x)(t) \tri {\mathbb E} \Big\{ x(t) | {\cal G}_{0,t}^{x^{i,u} }\Big\}, i=1, 2$, $\pi^{x^2}(u^1)(t) \tri {\mathbb E} \Big\{ u_t^1 | {\cal G}_{0,t}^{x^{2,u} }\Big\}, \pi^{x^1}(u^2)(t) \tri {\mathbb E} \Big\{ u_t^2 | {\cal G}_{0,t}^{x^{1,u} }\Big\},$ (and predictions of  $x(t), u_t^1, u_t^2$).  It appears to us that the optimal team laws are the same as those derived for nonanticipative information structures, given by (\ref{ex49i}), (\ref{ex49j}), with $\pi^{w^i}(x)(t)$ replaced by $\pi^{x^i}(x)(t), i=1, 2$ and $\overline{u^{1,o}}(t), \overline{u^{2,o}}(t)$ replaced by  $\pi^{x^2}(u^1)(t), \pi^{x^1}(u^2)(t)$. These estimates (filters)  may not be described in terms of linear Kalman-type equations driven by the DMs strategies governing the  conditional means, whose gains are specified by  the conditional error covariance equations,  independently of the observations. A possible approach is to  compute these conditional expectations is the identification of  a sufficient statistic as in \cite{charalambous1997,charalambous-elliott1997,charalambous-elliott1998}. \\

\noi{\bf Signaling.} Given the optimal decentralized strategies of Theorem~\ref{dise} we can determine the amount of signaling among the DMs to reduce the computational complexity of the optimal strategies.

\end{remark}

\section{Conclusions and Future Work}
\label{cf}
In this second part of our two-part paper, we invoke the stochastic maximum principle, conditional Hamiltonian and the coupled backward-forward stochastic differential equations of the first part \cite{charalambous-ahmedFIS_Parti2012} to derive team optimal decentralized strategies for   distributed stochastic differential systems with noiseless information structures. We present examples of such team games of nonlinear  as well as linear quadratic forms. In some cases we obtain closed form expressions of the optimal  decentralized strategies.  

\noi The methodology is very general, and applicable to several types of information structures such as the ones described under Remark~\ref{remPIP}. It will be interesting to consider additional types of information structures and compute the optimal decentralized strategies in closed form, to better understand the implications of signaling and computational complexity of such strategies compared to centralized strategies.

\bibliographystyle{IEEEtran}
\bibliography{bibdata}

\end{document}